\documentclass{article}
\pdfoutput=1 
\usepackage{amsmath, amsthm, amssymb, amstext, amsfonts}
\usepackage{enumerate}
\usepackage[applemac]{inputenc}
\usepackage{graphicx}

\theoremstyle{plain}
\newtheorem{thm}{Theorem}[section]
\newtheorem{lem}[thm]{Lemma}

\newtheorem{cor}[thm]{Corollary}
\newtheorem{prop}[thm]{Proposition}

\theoremstyle{definition}

\newtheorem{ex}{Example}

\newtheorem{prob}{Problem}

\newcount\commentno
\def\COMMENT#1{$^{<\the\commentno>}$%
     \vadjust{\vbox to 0pt{\vss\vskip-8pt\rightline{%
     \rlap{\hbox{\hskip7mm \vbox{\pretolerance=-1
     \doublehyphendemerits=0 \finalhyphendemerits=0
     \hsize40mm\tolerance=10000\eightpoint
     \lineskip=0pt\lineskiplimit=0pt
     \rightskip=0pt plus16mm\baselineskip8pt\noindent
     \hskip0pt       
     {$\langle$\the\commentno. #1$\rangle$}\endgraf}}}}\vss}}%
     \global\advance\commentno by1}%
\def\writecommentsasfootnotes{%
 \def\COMMENT{\global\advance\commentno by1\footnote{$^{<\the\commentno>}$}}%
 }
\def\nocomments{\def\COMMENT##1{}}
%
%
\def\?#1{\vadjust{\vbox to 0pt{\vss\vskip-8pt\leftline{%
     \llap{\hbox{\vbox{\pretolerance=-1
     \doublehyphendemerits=0\finalhyphendemerits=0
     \hsize16truemm\tolerance=10000\small
     \lineskip=0pt\lineskiplimit=0pt
     \rightskip=0pt plus16truemm\baselineskip8pt\noindent
     \hskip0pt        
     #1\endgraf}\hskip7truemm}}}\vss}}}
%

\newenvironment{txteq*}
  {
    \begin{equation*}
    \begin{minipage}[c]{0.85\textwidth} 
    \em                                
  }
  {\end{minipage}\end{equation*}\ignorespacesafterend}
%
%
\def\specrel#1#2{\mathrel{\mathop{\kern0pt #1}\limits_{#2}}}
\def\Specrel#1#2{\mathrel{\mathop{\kern0pt #1}\limits^{#2}}}

\newcommand{\lk}{$({<k)}$}
\newcommand{\kinsep}{\lk-insep\-ar\-able}
\newcommand{\lek}{$({< k+1)}$}
\newcommand{\lekinsep}{\lek-inseparable}
\newcommand{\lell}{$({<\ell)}$}
\newcommand{\linsep}{\lell-insep\-ar\-able}

\newcommand{\N}{\ensuremath{\mathbb{N}}}

\newcommand{\sm}{\ensuremath{\smallsetminus}}

\newcommand{\Aut}{\textnormal{Aut}}

\newcommand{\es}{\ensuremath{\emptyset}}

\newcommand{\sub}{\subseteq}

\newcommand{\cB}{\ensuremath{\mathcal B}}

\newcommand{\cD}{\ensuremath{\mathcal D}}

\newcommand{\cL}{\ensuremath{\mathcal L}}

\newcommand{\cS}{{\ensuremath S}}
\newcommand{\cT}{{\ensuremath T}}

\newcommand{\cV}{\ensuremath{\mathcal V}}

\newcommand{\TV}{\ensuremath{(\cT,\cV)}}

\newcommand{\Tshaped}{{\rm\sffamily T}}

\def\td{tree-decom\-po\-si\-tion}

\newcommand{\dhk}{\frac{3}{2}k}

\newcommand{\sepn}[2]{\ensuremath{{(#1,#2)}}}
\newcommand{\AB}{\sepn AB}
\newcommand{\BA}{\sepn BA}
\newcommand{\CD}{\sepn CD}


\let\doublebar=\|

\newcommand{\mcm}[3]{\newcommand{#1}[#2]{{\ensuremath{#3}}}} 

\mcm{\tuple}{1}{\langle #1 \rangle}
\mcm{\name}{1}{\ulcorner #1 \urcorner}
\mcm{\Nbb}{0}{\mathbb{N}}
\mcm{\Zbb}{0}{\mathbb{Z}}
\mcm{\Rbb}{0}{\mathbb{R}}
\mcm{\Cbb}{0}{\mathbb{C}}
\mcm{\Fbb}{0}{\mathbb{F}}
\mcm{\Bcal}{0}{\cal B}
\mcm{\Ccal}{0}{\cal C}
\mcm{\Dcal}{0}{\cal D}
\mcm{\Ecal}{0}{\cal E}
\mcm{\Fcal}{0}{\cal F}
\mcm{\Gcal}{0}{\cal G}
\mcm{\Hcal}{0}{\cal H}
\mcm{\Ical}{0}{\cal I}
\mcm{\Lcal}{0}{\cal L}
\mcm{\Mcal}{0}{\cal M}
\mcm{\Ncal}{0}{\cal N\!}
\mcm{\Ocal}{0}{\cal O}
\mcm{\Pcal}{0}{{\cal P}}
\mcm{\Scal}{0}{{\cal S}}
\mcm{\Tcal}{0}{{\cal T}}
\mcm{\Ucal}{0}{{\cal U}}
\mcm{\Vcal}{0}{{\cal V}}
\mcm{\Wcal}{0}{{\cal W}}
\mcm{\Ycal}{0}{{\cal Y}}
\mcm{\Mfrak}{0}{\mathfrak M}

\usepackage{verbatim}

\nocomments

\title{$k$-Blocks: a connectivity invariant for graphs}
\author{J.\ Carmesin \and R.\ Diestel \and M.\ Hamann \and F.\ Hundertmark}

\begin{document}

\maketitle

\begin{abstract}\noindent
A {\it $k$-block\/} in a graph $G$ is a maximal set of at least $k$ vertices no two of which can be separated in $G$ by fewer than $k$ other vertices. The {\em block number\/} $\beta(G)$ of $G$ is the largest integer~$k$ such that $G$ has a $k$-block.

We investigate how $\beta$ interacts with density invariants of graphs, such as their minimum or average degree. We further present algorithms that decide whether a graph has a $k$-block, or which find all its $k$-blocks.

The connectivity invariant $\beta(G)$ has a dual width invariant, the {\em block-width\/} ${\rm bw}(G)$ of~$G$. Our algorithms imply the duality theorem $\beta = {\rm bw}$: a~graph has a {\em block-decomposition\/} of width and adhesion $< k$ if and only if it contains no $k$-block.
\end{abstract}

\section{Introduction}

Given $k\in\N$, a set $I$ of at least $k$ vertices of a graph $G$ is \emph{\kinsep} if no set $S$ of fewer than $k$ vertices of~$G$ separates any two vertices of~$I\sm S$ in~$G$.
A maximal \kinsep\ set is a \emph{$k$-block}. The degree of connectedness of such a set of vertices is thus measured in the ambient graph~$G$, not only in the subgraph they induce. While the vertex set of a $k$-connected subgraph of~$G$ is clearly \kinsep\ in~$G$, there can also be $k$-blocks that induce few or no edges.

The $k$-blocks of a graph were first studied by Mader~\cite{mader78}. They have recently received some attention because, unlike its $k$-connected subgraphs, they offer a meaningful notion of the `$k$-connected pieces' into which the graph may be decomposed~\cite{confing}. This notion is related to, but not the same as, the notion of a tangle in the sense of Robertson and Seymour~\cite{GMX}; see Section~\ref{sec_Tangles} and~\cite{profiles} for more on this relationship.

Although Mader~\cite{mader72} had already proved that graphs of average degree at least~$4(k-1)$ have $k$-connected subgraphs, and hence contain a $k$-block, he did not in~\cite{mader78}
 consider the analogous extremal problem for the weaker notion of a $k$-block directly. 

Our aim in this paper is to study this problem: we ask what average or minimum degree conditions force a given finite graph to contain a $k$-block.

This question can, and perhaps should, be seen in a wider extremal context. Let $\beta(G)$ denote the {\em block number\/} of~$G$, the greatest integer~$k$ such that $G$ has a $k$-block (equivalently: has a \kinsep\ set of vertices). This $\beta$ seems to be an interesting graph invariant%
   \footnote{For example, in a network $G$ one might think of the nodes of a $\beta(G)$-block as locations to place some particularly important servers that should still be able to communicate with each other when much of the network has failed.},
  and one may ask how it interacts with other graph invariants, not just the average or minimum degree. Indeed, the examples we describe in Section~\ref{sec_Examples} will show that containing a $k$-block for large $k$ is compatible with having bounded minimum and average degree, even in all subgraphs.%
   \COMMENT{}
   So $k$-blocks can occur in very sparse graphs, and one will need bounds on other graph invariants than $\delta$ and~$d$ to force $k$-blocks in such graphs.

There is an invariant dual to~$\beta$: the least integer $k$ such that a graph~$G$ has a {\em block-decomposition\/} of adhesion and width both at most~$k$. Calling this $k$ the {\em block-width\/} ${\rm bw}(G)$ of~$G$, we can express the duality neatly as $\beta = \rm bw$.

All the graphs we consider are finite.
Our paper is organized as follows. In Section~\ref{basics} we introduce whatever terminology is not covered in~\cite{DiestelBook10noEE}, and give some background on tree-decompositions. In Section~\ref{sec_Examples} we present examples of $k$-blocks, aiming to exhibit the diversity of the concept. In Section~\ref{sec_MinDeg} we prove that graphs of minimum degree at least $2(k-1)$ have a $k$-block. If the graph~$G$ considered is ($k-1$)-connected, the minimum degree needed comes down to at most~$\frac{3}{2}(k-1)$, and further to $k$ if $G$ contains no triangle. In Section~\ref{sec_AvDeg} we show that graphs of average degree at least $3(k-1)$ contain a $k$-block. In Section~\ref{sec_Tangles} we clarify the relationship between $k$-blocks and tangles. In Section~\ref{sec_Algo} we present a polynomial-time algorithm that decides whether a given graph has a $k$-block, and another that finds all the $k$-blocks in a graph. This latter algorithm gives rise to our duality theorem $\beta = \rm bw$.

\section{Terminology and background}\label{basics}
All graph-theoretic terms not defined within this paper are explained in~\cite{DiestelBook10noEE}. Given a graph $G=(V,E)$, an ordered pair $(A,B)$ of vertex sets such that $A\cup B = V$ is called a \emph{separation} of $G$ if there is no edge $xy$ with $x \in A\sm B$ and $y\in B\sm A$. The sets $A,B$ are the \emph{sides} of this separation.
A separation $(A,B)$ such that neither $A \sub B$ nor $B\sub A$ is a \emph{proper} separation. The \emph{order} of a separation \AB\ is the cardinality of its \emph{separator} $A\cap B$. A separation of order $k$ is called a \emph{$k$-separation}. A simple calculation yields the following:

\begin{lem}\label{counting}
Given any two separations $(A,B)$ and $(C,D)$ of~$G$, the orders of the separations $(A\cap C, B\cup D)$ and $(B\cap D, A\cup C)$ sum to $|A\cap B|+|C\cap D|$.\qed
\end{lem}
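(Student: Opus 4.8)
The plan is to verify the claimed identity by a direct counting argument, tracking which vertices land in each separator. Write $X := A \cap C$ and $Y := B \cup D$, so the first new separation is $(X,Y)$ with separator $X \cap Y = (A\cap C)\cap(B\cup D)$; similarly the second has separator $(B\cap D)\cap(A\cup C)$. First I would confirm that $(X,Y)$ really is a separation of $G$, i.e.\ that $X\cup Y = V$ (immediate, since $A\cup B = V$ and $A\cap C \supseteq \ldots$ — more carefully, $X \cup Y \supseteq (A\cap C)\cup B \cup D$, and one checks $V \subseteq (A\cap C)\cup B$ using $A\cup B = V$ together with $C\cup D = V$) and that there is no edge between $X\sm Y$ and $Y\sm X$; the latter follows because such an edge would have one endpoint in $A\sm B$ and the other in $B\sm A$ or one in $C\sm D$ and the other in $D\sm C$. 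The symmetric statement handles $(B\cap D, A\cup C)$.

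The heart of the argument is the cardinality bookkeeping. I would partition the vertex set $V$ according to which of the four "strips" $A\sm B$, $A\cap B$, $B\sm A$ a vertex lies in, crossed with the analogous trichotomy for $(C,D)$, giving nine cells (some possibly empty). For each cell one reads off directly whether its vertices contribute to $|A\cap B|$, to $|C\cap D|$, to the separator $(A\cap C)\cap(B\cup D)$ of $(X,Y)$, and to the separator $(B\cap D)\cap(A\cup C)$ of the other separation. The key observation is that a vertex in the cell $(A\cap B)\cap(C\cap D)$ is counted twice on the left-hand side $|A\cap B| + |C\cap D|$ and twice on the right-hand side (it lies in both new separators), while a vertex in a cell that contributes exactly once to the left — say $(A\cap B)\cap(C\sm D)$ — contributes exactly once to the right (it lies in $A\cap C \subseteq A$, in $B$, in $A\cup C$, but one checks it is in exactly one of $B\cup D$, $B\cap D$ appropriately). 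Running through all nine cells shows the two sides agree cell by cell, hence in total.

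Alternatively, and more slickly, I would use inclusion–exclusion on indicator functions: for $v\in V$ let $a = \mathbf 1_{v\in A}$, $b=\mathbf 1_{v\in B}$, $c=\mathbf 1_{v\in C}$, $d=\mathbf 1_{v\in D}$, noting $a+b\ge 1$ and $c+d\ge 1$ for every $v$ (since $A\cup B = C\cup D = V$). Then $\mathbf 1_{v\in A\cap B} + \mathbf 1_{v\in C\cap D} = ab + cd$, whereas $\mathbf 1_{v\in (A\cap C)\cap(B\cup D)} + \mathbf 1_{v\in(B\cap D)\cap(A\cup C)} = ac(b + d - bd) + bd(a + c - ac)$. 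Expanding, the right side equals $abc + acd - abcd + abd + bcd - abcd = abc + acd + abd + bcd - 2abcd$, while from $a+b\ge 1, c+d\ge 1$ with all values in $\{0,1\}$ one gets $ab + cd = abc + abd + acd + bcd - 2abcd$ as well (this is the identity one should check on the eight possible tuples, or derive from $ab = ab(c+d) - abcd$ and $cd = cd(a+b) - abcd$, valid because $c+d\ge 1$ forces $ab(c+d) \ge ab$ with equality, etc.). Summing over all $v\in V$ gives the lemma.

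The main obstacle, such as it is, is purely organizational: making sure the degenerate cases (empty cells, or vertices lying in $A\cap B\cap C\cap D$) are handled so that nothing is double- or under-counted. The indicator-function version sidesteps this cleanly, since the identity $ab + cd = abc+abd+acd+bcd - 2abcd$ holds for all $(a,b,c,d)\in\{0,1\}^4$ with $a+b\ge 1$ and $c+d\ge 1$, and can be verified in a line; this is why I would present that version rather than the nine-cell case analysis.
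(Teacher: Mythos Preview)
Your proposal is correct and supplies exactly the ``simple calculation'' the paper invokes but does not spell out; the indicator-function identity $ab+cd = abc+abd+acd+bcd-2abcd$ (valid since $c+d-cd=1$ and $a+b-ab=1$ under the constraints) is a clean way to package it. There is nothing to compare against, as the paper gives no proof beyond the phrase preceding the \textsf{qed}.
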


Recall that a \emph{\td} of $G$ is a pair $(\cT,\cV)$ of a tree $\cT$ and a family $\cV=(V_t)_{t\in \cT}$ of vertex sets $V_t\sub V$, one for every node of~$\cT$, such that:
\begin{enumerate}[(T1)]
\item $V = \bigcup_{t\in \cT}V_t$;
\item for every edge $e\in G$ there exists a $t \in \cT$ such that both ends of $e$ lie in~$V_t$;
\item $V_{t_1} \cap V_{t_3} \sub V_{t_2}$ whenever $t_2$ lies on the $t_1$--$t_3$ path in~$\cT$.
\end{enumerate}

\noindent
The sets $V_t$ are the \emph{parts} of \TV, its \emph{width} is the number $\max_{t\in\cT} |V_t|-1$, and the \emph{tree-width} of~$G$ is the least width of any \td\ of~$G$.

The intersections $V_t\cap V_{t'}$ of ‘adjacent’ parts in a tree-decomposition $(\cT,\cV)$ (those for which $tt'$ is an edge of~$\cT$) are its \emph{adhesion sets}; the maximum size of such a set is the \emph{adhesion} of $(\cT,\cV)$. The \emph{interior} of a part $V_t$, denoted by $\mathring{V}_t$, is the set of those vertices in~$V_t$ that lie in no adhesion set. By~(T3), we have $\mathring{V}_t = V_t\sm\bigcup_{t'\neq t}V_{t'}$.

Given an edge $e=t_1t_2$ of~$\cT$, the two components $\cT_1\owns t_1$ and $\cT_2\owns t_2$ of $\cT-e$ define separations \AB\ and \BA\ of~$G$ with $A = \bigcup_{t\in\cT_1} V_t$ and $B=\bigcup_{t\in\cT_2} V_t$, whose separator is the adhesion set~$V_{t_1}\cap V_{t_2}$ \cite[Lemma~12.3.1]{DiestelBook10noEE}. We call these the separations \emph{induced} by the \td\ \TV. Note that the adhesion of a \td\ is the maximum of the orders of the separations it induces.

A~\td\ {\em distinguishes\/} two $k$-blocks $b_1,b_2$ if it induces a separation that separates them. It does so {\em efficiently\/} if this separation can be chosen of order no larger than the minimum order of a $b_1$--$b_2$ separator in~$G$. The \td\ $(\cT, \cV)$ is $\Aut(G)$-{\em invariant\/} if the automorphisms of $G$ act on the set of parts in a way that induces an action on the tree~\cT. The following theorem was proved in~\cite{confing}:

\begin{thm}\label{canonicaltd}
For every $k\in\N$, every graph $G$ has an $\Aut(G)$-invariant \td\ of adhesion at most~$k$ that efficiently distinguishes all its $k$-blocks.
\end{thm}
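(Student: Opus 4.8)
The plan is to build the desired tree-decomposition by iteratively refining a nested family of separations, each of small order, that together distinguish every pair of $k$-blocks efficiently, and then to invoke the standard correspondence between nested separation systems and tree-decompositions. The starting point is the observation that if $b_1,b_2$ are distinct $k$-blocks, then by maximality there is a vertex in one not equivalent to a vertex in the other, so there is a separation of order $<k$ — in fact of order equal to the minimum size of a $b_1$--$b_2$ separator, which is $<k$ since neither block is \kinsep\ across a smaller cut — with $b_1$ essentially on one side and $b_2$ on the other. The first step is therefore to set up, for each unordered pair $\{b_1,b_2\}$, the (nonempty) set of separations of minimum possible order that separate them, and to show these minimum-order separators behave well: a $k$-block lies entirely (up to the separator) on one side of any separation of order $<k$, because it is \kinsep.

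The second and central step is to choose, canonically and consistently, one separation for each pair so that the resulting family is \emph{nested} (no two of the chosen separations cross) and \emph{$\Aut(G)$-invariant}. Here one uses Lemma~\ref{counting}: if two separations $(A,B)$ and $(C,D)$ of orders $<k$ cross, then the two "corner" separations $(A\cap C,B\cup D)$ and $(B\cap D,A\cup C)$ have orders summing to $|A\cap B|+|C\cap D|$, so at least one of them has order no larger than the smaller of the two original orders; moreover each $k$-block, lying on one side of each original separation, lies on one side of the relevant corner separation, so the corner separations still distinguish the same blocks at least as efficiently. Uncrossing in this way, and breaking ties by a canonical rule (e.g. minimizing a well-ordered invariant of the separator, or taking a suitable "lowest" corner), produces a nested, symmetry-invariant family $\cN$ of separations of order $\le k$ that efficiently distinguishes all $k$-blocks. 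This is exactly the kind of argument developed in~\cite{confing}; the main obstacle is making the canonical choice genuinely invariant under $\Aut(G)$ while simultaneously preserving nestedness after all the uncrossings — one must check that the tie-breaking is itself automorphism-equivariant and that iterated uncrossing terminates and does not reintroduce crossings.

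The third step is routine: a nested family of separations of $G$, closed under the natural involution and containing the trivial separations, gives rise to a tree-decomposition $(\cT,\cV)$ whose induced separations are exactly those in~$\cN$ (see the discussion after Lemma~12.3.1 in~\cite{DiestelBook10noEE}), with adhesion equal to the maximum order of a separation in~$\cN$, hence at most~$k$. Since $\cN$ is $\Aut(G)$-invariant, the automorphisms of~$G$ permute the parts and edges of~$\cT$ compatibly, so $(\cT,\cV)$ is $\Aut(G)$-invariant; and since $\cN$ contained, for every pair of $k$-blocks, a separation of minimum order distinguishing them, the tree-decomposition distinguishes all $k$-blocks efficiently. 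I expect essentially all the real work to sit in the second step — specifically, in verifying that the canonical uncrossing procedure is well-defined, terminates, and yields a family that is at once nested, automorphism-invariant, and still efficient — while the first and third steps are bookkeeping.
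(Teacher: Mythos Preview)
The paper does not prove Theorem~\ref{canonicaltd}: it is quoted as a result from~\cite{confing}, with no argument given here. So there is no ``paper's own proof'' to compare your proposal against.

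That said, your sketch is a fair outline of the strategy actually used in~\cite{confing} (which you yourself cite): set up minimum-order separations between pairs of $k$-blocks, uncross them canonically using the submodularity inequality of Lemma~\ref{counting} to obtain a nested $\Aut(G)$-invariant family, and then translate that nested family into a tree-decomposition. You have correctly identified that essentially all the difficulty lives in the second step. Two cautions if you intend to flesh this out into a self-contained proof. First, the uncrossing is not done pair-by-pair with ad hoc tie-breaking; in~\cite{confing} one works instead with the set of \emph{all} separations of a given minimum order that distinguish some pair of blocks, and shows that the ``extremal'' ones among these are already nested with each other and with everything chosen at earlier stages --- this is how $\Aut(G)$-invariance is obtained without an arbitrary choice. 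Second, your remark that the nested family ``contains the trivial separations'' and that the tree-decomposition's induced separations are ``exactly those in~$\cN$'' is not quite how the translation goes; one builds the tree from the equivalence classes of oriented separations, and the parts are recovered from these. Neither point is fatal to your plan, but both would need care in a full write-up.
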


A \td\ $(\cT,\Vcal)$ of a graph $G$ is \emph{lean} if for any nodes $t_1,t_2\in \cT$, not necessarily distinct,
and vertex sets $Z_1\subseteq V_{t_1}$ and $Z_2\subseteq V_{t_2}$ such that ${|Z_1|=|Z_2|=:\ell}$, either $G$ contains
$\ell$ disjoint $Z_1$--$Z_2$ paths or there exists an edge $tt'\in t_1\cT t_2$ with $|V_t\cap V_{t'}|<\ell$. Since there is no such edge when $t_1=t_2 =:t$, this implies in particular that, for every part~$V_t$, any two subsets $Z_1,Z_2\sub V_t$ of some equal size~$\ell$ are linked in $G$ by $\ell$ disjoint paths.%
   \COMMENT{}
(However, the parts need not be \linsep\ for any large~$\ell$; see Section~\ref{sec_Examples}.)

We call a \td\ $(\cT,\Vcal)$ \emph{$k$-lean} if none of its parts contains another, it has adhesion at most~$k$, and for any nodes $t_1,t_2\in \cT$,  not necessarily distinct,
and vertex sets $Z_1\subseteq V_{t_1}$ and $Z_2\subseteq V_{t_2}$ such that $|Z_1|=|Z_2|=:\ell\leq k+1$, either $G$ contains
$\ell$ disjoint $Z_1$--$Z_2$ paths or there exists an edge $tt'\in t_1\cT t_2$ with $|V_t\cap V_{t'}|<\ell$.

Thomas~\cite{thomas90} proved that every graph~$G$ has a lean \td\ whose width is no greater than the tree-width of~$G$. By considering only separations of order at most~$k$ one can adapt the short proof of Thomas's theorem given in~\cite{bellenbaumDiestel} to yield the following:

\begin{thm}\label{k-lean}
 For every $k\in\N$, every graph has a $k$-lean \td.
\end{thm}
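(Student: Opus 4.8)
The plan is to follow the strategy of the Bellenbaum--Diestel proof of Thomas's lean tree-decomposition theorem, but to truncate all considerations at order~$k$, so that the "leanness" improvements we perform only ever concern separations of order at most~$k$. We start with any \td\ of~$G$ of adhesion at most~$k$ (for instance the trivial one with a single part~$V$, or the canonical one from Theorem~\ref{canonicaltd}), and we repeatedly improve it. To each \td\ we associate a finite multiset of natural numbers recording, with multiplicity, the sizes of its parts (or some refinement thereof, e.g. the sizes of the parts together with the orders of the induced separations), ordered lexicographically or by a suitable well-founded measure; each improvement step will strictly decrease this measure, so the process terminates. The output \td\ will have adhesion at most~$k$ and will be $k$-lean.

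First I would set up the improvement step. Suppose the current \td\ $(\cT,\Vcal)$ fails to be $k$-lean: either some part contains another --- in which case we contract the corresponding edge of~$\cT$ (or delete the smaller part), which cannot increase any part size and reduces the number of parts, a clear drop in the measure --- or there are nodes $t_1,t_2$ and sets $Z_1\sub V_{t_1}$, $Z_2\sub V_{t_2}$ with $|Z_1|=|Z_2|=\ell\le k+1$ such that $G$ has no $\ell$ disjoint $Z_1$--$Z_2$ paths and yet every adhesion set along the path $t_1\cT t_2$ has size~$\ge\ell$. By Menger's theorem there is a set $S$ with $|S|<\ell\le k+1$, hence $|S|\le k$, separating $Z_1$ from $Z_2$ in~$G$; this yields a separation $(A,B)$ of order $<\ell$ with $Z_1\sub A$ and $Z_2\sub B$. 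The core of the argument is then to use $(A,B)$ to surgically modify $(\cT,\Vcal)$: one pushes the separation~$(A,B)$ "into" the \td\ along the path from $t_1$ to~$t_2$, replacing the relevant parts $V_t$ by $V_t\cap A$ or $V_t\cap B$ (with the separator $A\cap B$ added where needed to preserve (T1)--(T3)), possibly inserting a new node carrying the small part $(A\cap B)\cup(\text{something})$. One checks, exactly as in~\cite{bellenbaumDiestel}, that the result is again a \td, that no new adhesion set exceeds the old ones in size (so adhesion stays $\le k$), and --- using Lemma~\ref{counting} to control how orders of induced separations combine when we cut along $(A,B)$ --- that the chosen measure strictly decreases, because at least one part that was "too big relative to its separators" gets genuinely smaller while nothing grows.

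I would then assemble these steps: start from a \td\ of adhesion $\le k$; while it is not $k$-lean, apply the appropriate improvement (edge contraction in the containment case, the Menger-separation surgery in the linkage case); since each step strictly decreases a well-founded measure, the process halts at a \td\ that is, by construction, $k$-lean and of adhesion $\le k$. The main obstacle is the bookkeeping in the surgery step: one must choose the measure and the precise cutting procedure so that truncating at order~$k$ does not break the monotonicity --- in Thomas's original and in the Bellenbaum--Diestel write-up the decreasing quantity involves \emph{all} parts, and here we must verify that ignoring separations of order~$>k$ (which we are allowed to leave un-improved) still leaves us with a quantity that drops at every step and that the un-improved large-order separations are never made worse. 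This is exactly the "consider only separations of order at most~$k$" adaptation alluded to before the statement, and checking that it goes through --- in particular that the new node we may insert has a part of size at most $k+(\ell-1)\le 2k$ or is otherwise dominated in the measure --- is where the real work lies; everything else is a routine transcription of the known proof.
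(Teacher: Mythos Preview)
Your proposal is correct and follows exactly the approach the paper indicates: the paper does not spell out a proof but simply asserts that one can adapt the short Bellenbaum--Diestel proof of Thomas's theorem by ``considering only separations of order at most~$k$'', which is precisely the truncation you describe. Your sketch of the improvement step and the well-founded measure is a faithful expansion of that one-line reference.
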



\section{\boldmath Examples of $k$-blocks}\label{sec_Examples}

In this section we discuss three different types of $k$-block.

\begin{ex}\label{kconex}
The vertex set of any $k$-connected subgraph is \kinsep, and hence contained in a $k$-block.
\end{ex}

While a $k$-block as in Example~\ref{kconex} derives much or all of its inseparability from its own connectivity as a subgraph, the $k$-block in our next example will form an independent set. It will derive its inseparability from the ambient graph, a large grid to which it is attached.

\begin{ex}\label{ex_gridBlock}
Let $k\geq 5$, and let $H$ be a large $(m\times n)$-grid, with $m,n\geq {k^2}$ say.%
   \COMMENT{}   
Let $G$ be obtained from~$H$ by adding a set $X = \{x_1,\ldots,x_k\}$ of new vertices, joining each $x_i$ to at least $k$ vertices on the grid boundary that form a (horizontal or vertical) path in~$H$%
   \COMMENT{}
    so that every grid vertex obtains degree~$4$ in~$G$ (Figure~\ref{pic_gridEx}).
We claim that $X$ is a $k$-block of~$G$, and is its only $k$-block.

Any grid vertex can lie in a common $k$-block of~$G$ only with its neighbours, because these separate it from all the other vertices. As any $k$-block has at least $k\ge 5$ vertices but among the four $G$-neighbours of a grid vertex at least two are non-adjacent grid vertices, this implies that no $k$-block of~$G$ contains a grid vertex. On the other hand, every two vertices of $X$ are linked by $k$ independent paths in~$G$, and hence cannot be separated by fewer than~$k$ vertices. Hence $X$ is \kinsep, maximally so, and is thus the only $k$-block of~$G$.
\end{ex}

\begin{figure}[h]
\begin{center}
\includegraphics[width=.6\textwidth]{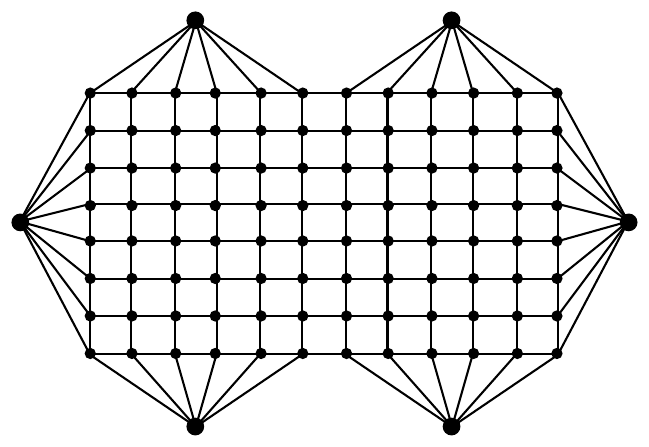}
\caption[Figure 1]{The six outer vertices form a $6$-block}\label{pic_gridEx}
   \vskip-12pt\vskip-12pt
\end{center}
\end{figure}

In the discussion of Example~\ref{ex_gridBlock} we saw that none of the grid vertices lies in a $k$-block. In particular, the grid itself has no $k$-block when $k\ge 5$. Since every two inner vertices of the grid, those of degree~4, are joined in the grid by 4 independent paths, they form a ${(<4)}$-inseparable set (which is clearly maximal):

\begin{ex}\label{grid}
The inner vertices of any large grid $H$ form a 4-block in~$H$. However, $H$ has no $k$-block for any $k\ge 5$.
\end{ex}

The $k$-block defined in Example~\ref{ex_gridBlock} gives rise to a tangle of large order (see Section~\ref{sec_Tangles}), the same as the tangle specified by the grid~$H$. This is in contrast to our last two examples, where the inseparability of the $k$-block will again lie in the ambient graph but in a way that need not give rise to a non-trivial tangle. (See Section~\ref{sec_Tangles} for when it does.) Instead, the paths supplying the required connectivity will live in many different components of the subgraph into which the $k$-block splits the original graph.

\begin{ex}\label{TKn}
Let $X$ be a set of $n\ge k$%
   \COMMENT{}
   isolated vertices. Join every two vertices of~$X$ by many (e.g., $k$) independent paths, making all these internally disjoint. Then $X$ will be a $k$-block in the resulting graph.
\end{ex}

Example~\ref{TKn} differs from Example~\ref{ex_gridBlock} in that its graph has a \td\ whose only part of order~$\ge 3$ is $X$. Unlike the grid in Example~\ref{ex_gridBlock}, the paths providing $X$ with its external connectivity do not between them form a subgraph that is in any sense highly connected. We can generalize this as follows:

\begin{ex}\label{ex_treeBlock}
Given $n\ge k$, consider a tree $T$ in which every non-leaf node has $\binom{n}{k-1}$ successors. Replace each node $t$ by a set $V_t$ of $n$ isolated vertices. Whenever $t'$ is a successor of a node~$t$ in~$T$, join $V_{t'}$ to a ($k-1$)-subset $S_{t'}$ of~$V_t$ by $(k-1)$ independent edges, so that these $S_{t'}$ are distinct sets for different successors $t'$ of~$t$. For every leaf $t$ of~$T$, add edges on $V_t$ to make it complete. The $k$-blocks of the resulting graph~$G$ are all the sets~$V_t$ ($t\in T$), but only the sets $V_t$ with $t$ a leaf of $T$ induce any edges.
\end{ex}

Interestingly, the $k$-blocks that we shall construct explicitly in our proofs will all be {\em connected\/}, i.e., induce connected subgraphs. Thus, our proof techniques seem to be insufficient to detect $k$-blocks that are disconnected or even independent, such as those in our examples. However, we do not know whether or not this affects the quality of our bounds or just their witnesses:

\begin{prob}\label{condiscon}
Does every minimum or average degree bound that forces the existence of a $k$-block also force the existence of a connected \kinsep\ set?%
   \COMMENT{}
   \end{prob}

Even if the answer to this problem is positive, it will reflect only on how our invariant~$\beta$ relates to the invariants $\delta$ and~$d$, and that for some graphs it may be more interesting to relate $\beta$ to other invariants. The existence of a large $k$-block in Examples \ref{ex_gridBlock} and~\ref{TKn}, for instance, will not follow from any theorem relating $\beta$ to $\delta$ or~$d$, since graphs of this type have a bounded average degree%
   \COMMENT{}
   independent of~$k$, even in all subgraphs. But they are key examples, which similar results about $\beta$ and other graph invariants may be able to detect.


\section{\boldmath Minimum degree conditions forcing a $k$-block}\label{sec_MinDeg}

Throughout this section, let $G=(V,E)$ be a fixed non-empty graph. We ask what minimum degree will force $G$ to contain a $k$-block for a given integer~$k>0$.

Without any further assumptions on~$G$ we shall see that $\delta(G)\ge 2(k-1)$ will be enough.
If we assume that $G$ is ($k-1$)-connected~-- an interesting case, since for such $G$ the parameter~$k$ is minimal such that looking for $k$-blocks can be non-trivial~-- we find that $\delta(G) > \frac{3}{2}k-\frac{5}{2}$ suffices. If $G$ is ($k-1$)-connected but contains no triangle, even $\delta(G)\ge k$ will be enough. Note that this is best possible in the (weak) sense that the vertices in any $k$-block will have to have degree at least~$k$, except in some very special cases that are easy to describe.%
   \COMMENT{}

Conversely, we construct a ($k-1$)-connected graph of minimum degree ${\lfloor \dhk-\frac{5}{2}\rfloor}$ that has no $k$-block. So our second result above is sharp.

\medbreak

To enhance the readability of both the results and the proofs in this section, we give bounds on $\delta$ which force the existence of a ($k+1$)-block for any $k\ge 0$. 

We shall often use the fact that a vertex of $G$ together with $k$ or more of its neighbours forms a \lekinsep\ set as soon as these neighbours are pairwise not separated by $k$ or fewer vertices. Let us state this as a lemma:

\begin{lem}\label{vx_block}
Let $v\in V$ and $N\sub N(v)$ with $|N|\geq k$.
If no two vertices of~$N$ are separated in $G$ by at most~$k$ vertices, then $N\cup\{v\}$ lies in a $(k+1)$-block.\qed
\end{lem}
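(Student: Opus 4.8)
The plan is to verify directly, straight from the definitions, that $N\cup\{v\}$ is a \lekinsep\ set. Once that is done, $N\cup\{v\}$ is contained in some maximal \lekinsep\ set (such a maximal set exists because $G$ is finite), and that maximal set is by definition a $(k+1)$-block containing $N\cup\{v\}$, which is exactly the conclusion.

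First I would check the cardinality condition built into the notion of a \lekinsep\ set: since $G$ is a simple graph, $v\notin N(v)\supseteq N$, so $v\notin N$ and hence $|N\cup\{v\}|=|N|+1\ge k+1$. Next, fix an arbitrary vertex set $S$ with $|S|\le k$ and two distinct vertices $x,y\in(N\cup\{v\})\sm S$; the task is to find an $x$--$y$ path in $G-S$. I would split into two cases. If both $x$ and $y$ lie in $N$, the hypothesis that no two vertices of $N$ are separated in $G$ by at most $k$ vertices gives such a path at once. Otherwise one of $x,y$ --- say $y$ --- equals $v$; then $x\in N\sm S\sub N(v)$, so $xv\in E$, and since $x\notin S$ and $v\notin S$ this single edge is an $x$--$v$ path in $G-S$. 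This exhausts all pairs, so $N\cup\{v\}$ is \lekinsep.

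I do not expect any real obstacle here: the statement is essentially an unwinding of the definition of \lekinsep, powered by the trivial observation that a separator avoiding $v$ cannot separate $v$ from any of its neighbours that it also avoids. The only points to be mildly careful about are the bookkeeping --- that $v\notin N$, so that $N\cup\{v\}$ genuinely has at least $k+1$ vertices --- and the translation between ``separated by at most $k$ vertices'' in the hypothesis and ``separated by fewer than $k+1$ vertices'' in the definitions of \lekinsep\ and of a $(k+1)$-block. This is why the lemma is stated as self-evident (with a \qed\ already in the statement): the argument above is the whole proof.
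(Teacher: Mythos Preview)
Your proposal is correct and is exactly the routine verification the paper has in mind: the lemma carries a \qed\ in its statement precisely because the argument is an immediate unwinding of the definition of \lekinsep, using that a separator avoiding $v$ cannot cut $v$ off from any neighbour it also avoids. There is nothing to add; your case split and the cardinality check are the whole proof.
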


Here, then, is our first sufficient condition for the existence of a $k$-block. It is essentially due to Mader~\cite[{Satz~$7'$}]{mader74}, though with a different proof:

\begin{thm}\label{min_deg}
If $\delta(G)\ge 2k$, then $G$ has a $(k+1)$-block. This $(k+1)$-block can be chosen to be connected in~$G$ and of size at least~$\delta(G)+1-k$.%
   \COMMENT{}
\end{thm}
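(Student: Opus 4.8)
The plan is to build the desired $(k+1)$-block by a greedy/extremal argument. First I would pick a connected subgraph $H\subseteq G$ that is minimal with respect to the property $|H| \ge \delta(G)+1-k$ and every vertex of $H$ having at least $k$ neighbours in $H$ (equivalently, $\delta(H)\ge k$ and $|H| \ge \delta(G)+1-k$); such an $H$ exists because $G$ itself has $\delta(G) \ge 2k \ge k$ and $|G| \ge \delta(G)+1 \ge \delta(G)+1-k$. The idea is that minimality will force $H$ to be highly connected in $G$: if some vertex $v\in H$ had more than $k$ neighbours in $H$, or if $H$ were larger than the bound, we could delete $v$ and still satisfy the constraints, contradicting minimality — so in fact $H$ is $k$-regular and $|H| = \delta(G)+1-k$ exactly, OR no vertex is deletable for a structural reason that we can exploit.

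Next I would show that $V(H)$ (or a slightly modified connected set derived from it) is $(<k+1)$-inseparable in~$G$, i.e. contained in a $(k+1)$-block. The workhorse should be Lemma~\ref{vx_block}: it suffices to exhibit a single vertex $v$ together with $k$ of its neighbours no two of which are separated in~$G$ by $\le k$ vertices, and then to check that the whole of $V(H)$ joins this set. To get the "no small separator'' property I would argue by contradiction: suppose $(A,B)$ is a separation of $G$ of order $\le k$ with vertices of $H$ on both sides $A\sm B$ and $B\sm A$. Using $\delta(H)\ge k$ together with the counting Lemma~\ref{counting} (to control how $H$ distributes across $A$ and $B$), and the minimality of $H$, I would derive that one side already contains a connected subgraph meeting the two constraints $\delta\ge k$ and size $\ge \delta(G)+1-k$ — for instance because a vertex of $H$ near the separator has $\le k$ neighbours, of which at most $|A\cap B| \le k$ lie in the separator, forcing almost all of its neighbourhood onto one side, and then peeling off the small side reduces $|H|$, contradicting minimality. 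This pins down that $H$ cannot be split by a $({<}k+1)$-separation, so $V(H)$ is $(<k+1)$-inseparable, hence lies in a $(k+1)$-block; and $H$ is connected by construction, giving a connected $(k+1)$-block of size $\ge |H| = \delta(G)+1-k$.

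The main obstacle I anticipate is the bookkeeping in the separation step: ensuring that when $(A,B)$ is a hypothetical bad separation, at least one of $G[A]$ and $G[B]$ (restricted to vertices of $H$, then possibly augmented to stay connected and to keep every vertex of degree $\ge k$) still has size at least $\delta(G)+1-k$. The size bound $\delta(G)+1-k$ is exactly calibrated so that this works — losing up to $k$ vertices to the separator is affordable because $|H|$ was at least $\delta(G)+1-k$ and one expects $|H|$ to be comfortably larger than $k$ when $\delta(G)\ge 2k$ — but making the "augment to stay connected'' step precise without inflating the size past the bound is the delicate point, and may require choosing $H$ to be not merely minimal but minimal under a more carefully chosen partial order (e.g. first minimizing $|V(H)|$, then something else), or working with $H$ minimal subject to connectivity plus $\delta\ge k$ and separately tracking the size. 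A cleaner alternative, which I would try first, is to take $H$ edge-minimal rather than vertex-minimal among connected subgraphs with $\delta \ge k$ and $|H|\ge \delta(G)+1-k$, since edge-minimality tends to make such subgraphs behave like subdivisions and interact transparently with vertex separators.
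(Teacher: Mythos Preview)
Your approach has a genuine gap, and it is not merely bookkeeping. The deduction in your first paragraph already fails: if $v\in H$ has more than $k$ neighbours in $H$, deleting $v$ lowers the $H$-degree of each neighbour of $v$ by one, so a neighbour that had $H$-degree exactly $k$ drops below $k$. Minimality therefore does \emph{not} force $H$ to be $k$-regular, nor does it force $|H|=\delta(G)+1-k$.

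The separation step breaks for the same reason. Suppose $(A,B)$ has order $\le k$ and $H$ meets both $A\sm B$ and $B\sm A$. Vertices of $H$ in $A\sm B$ keep all their $H$-neighbours when you pass to $H[A]$; but vertices in $A\cap B$ may lose all of theirs, so you must peel them off. Peeling at most $k$ such vertices can reduce the $H$-degree of a vertex in $A\sm B$ from $k$ all the way to $0$, and the hypothesis $\delta(G)\ge 2k$ does not save you: the extra $G$-neighbours of that vertex lie in $A$, but there is no reason they lie in $H$. So the restricted object need not contain any connected subgraph with $\delta\ge k$ at all, let alone one of size $\ge\delta(G)+1-k$. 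Neither edge-minimality nor a refined order on candidates fixes this, because the obstruction is not about which edges you kept but about the fact that $\delta(H)\ge k$ is too weak to survive the loss of a $k$-set. There is also a second, smaller gap: even if $V(H)$ were a connected \lekinsep\ set, the unique $(k+1)$-block containing it could acquire further vertices with no edge to $V(H)$ (cf.\ Example~\ref{TKn}), so you have not yet produced a connected $(k+1)$-\emph{block}.

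The paper sidesteps all of this by invoking a $k$-lean tree-decomposition (Theorem~\ref{k-lean}) rather than a minimal subgraph. In a leaf part $V_t$ the adhesion set $A_t$ has size $\le k$, so every interior vertex has at least $\delta(G)-k\ge k$ neighbours in $\mathring V_t$; this is where the full strength of $\delta\ge 2k$ is used, and it is used in $G$, not in an auxiliary $H$. Leanness then supplies $k{+}1$ disjoint paths between any two $(k{+}1)$-sets in $V_t$, which is exactly the inseparability statement you were trying to extract from minimality. Finally, the block $B\supseteq\mathring V_t$ is trapped in $V_t$, and any $a\in B\cap A_t$ must send an edge to $\mathring V_t$ (else $A_t\sm\{a\}$ would separate it off), so $B$ is connected. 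If you want to rescue the extremal route, the object to minimise is not an abstract $H$ with $\delta(H)\ge k$ but something that already carries the ambient connectivity, e.g.\ a side of a separation; Lemma~\ref{lem_T-shaped} and Lemma~\ref{lem_sepSepLargeKBlock} show what that looks like under an added connectivity hypothesis.
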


\begin{proof}{\lineskiplimit=-3pt
If $k=0$, then the assertion follows directly.
So we assume $k>0$.
By Theorem~\ref{k-lean}, $G$ has a $k$-lean \td\ $(\cT,\Vcal)$, say with $\cV = (V_t)_{t\in \cT}$. Pick a leaf $t$ of~$\cT$. (If $\cT$ has only one node, we count it as a leaf.) Write $A_t := V_t\cap\bigcup_{t'\ne t} V_{t'}$ for the attachment set of~$V_t$. As $V_t$ is not contained in any other part of $(\cT,\Vcal)$,%
   \COMMENT{}
   we have $\mathring{V}_t = V_t\sm A_t \ne\es$ by~(T3). By our degree assumption and $|A_t|\le k$, every vertex in $\mathring{V}_t$ has $k$ neighbours in~$\mathring{V}_t$. Thus, $|\mathring{V}_t|\ge k+1\ge 2$.%
   \COMMENT{}

}We prove that $\mathring{V}_t$ extends to a $(k+1)$-block $B\sub V_t$ that is connected in~$G$. Pick distinct vertices $v,v'\in\mathring{V}_t$. Let $N$ be a set of $k$ neighbours of~$v$, and $N'$ a set of $k$ neighbours of~$v'$. Note that $N\cup N'\sub V_t$.
As our \td\ is $k$-lean, there are $k+1$ disjoint paths in $G$ between the $(k+1)$-sets $N\cup\{v\}$ and $N'\cup\{v'\}$. Hence $v$ and $v'$ cannot be separated in $G$ by at most~$k$ other vertices.%
   \COMMENT{}

We have thus shown that $\mathring{V}_t$ is \lekinsep.%
   \COMMENT{}
  In particular, $A_t$~does not separate it, so $\mathring{V}_t$ is connected in~$G$. Let $B$ be a $(k+1)$-block containing~$\mathring{V}_t$. As $A_t$ separates $\mathring{V}_t$ from $G\sm V_t$, we have $B\sub V_t$. Every vertex of $B$ in~$A_t$ sends an edge to~$\mathring{V}_t$, since otherwise the other vertices of~$A_t$ would separate it from~$\mathring{V}_t$. Hence $B$ is connected. Since every vertex in $\mathring{V}_t$ has at least $\delta(G)-k$ neighbours in~$\mathring{V}_t\sub B$, we have the desired bound of $|B|\ge\delta(G)+1-k$.
\end{proof}

One might expect that our lower bound for the size of the $(k+1)$-block $B$ found in the proof of Theorem~\ref{min_deg} can be increased by proving that $B$ must contain the adhesion set of the part~$V_t$ containing it. While we can indeed raise the bound a little (by at least~1, but we do not know how much at most), we show in%
   \ Section~\ref{sec_FurtherEx}
that $B$ can lie entirely in the interior of~$V_t$.

We also do not know whether the degree bound of $\delta(G)\ge 2k$ in Theorem~\ref{min_deg} is sharp. The largest minimum degree known of a graph without a $(k+1)$-block is $\lfloor\dhk-1\rfloor$. This graph (Example~\ref{lange_Wurst} below) is $k$-connected, and we shall see that $k$-connected graphs of larger minimum degree do have $(k+1)$-blocks (Theorem~\ref{thm_m_wat}). Whether or not graphs of minimum degree between $\dhk-1$ and $2k$ and connectivity $<k$ must have $(k+1)$-blocks is unknown to us:

\begin{prob}\label{oque}
Given~$k\in\N$, determine the smallest value $\delta_k$ of $\delta$ such that every graph of minimum degree at least~$\delta$ has a $k$-block.
\end{prob}

It is also conceivable that the smallest minimum degree that will force a {\em connected\/} $(k+1)$-block~-- or at least a connected \lekinsep\ set, as found by our proof of Theorem~\ref{min_deg}~-- is indeed~$2k$ but possibly disconnected $(k+1)$-blocks can be forced by a smaller value of~$\delta$ (compare Problem~\ref{condiscon}).

\medbreak

The degree bound of Theorem~\ref{min_deg} can be reduced by imposing additional conditions on~$G$. Our next aim is to derive a better bound on the assumption that $G$ is $k$-connected, for which we need a few lemmas.

We say that a $k$-separation $(A,B)$ is \emph{\Tshaped-shaped} (Fig.~\ref{pic_Tshaped}) if it is a proper separation and there exists another proper $k$-separation $(C,D)$ such that ${A\sm B}\sub C\cap D$ as well as $|A\cap C|\leq k$ and ${|A\cap D|\leq k}$.
Obviously, $(A,B)$ is \Tshaped-shaped witnessed by $(C,D)$ if and only if the two separations $(A\cap C,B\cup D)$ and $(A\cap D,B\cup C)$ have order at most $k$ and are improper separations.%
   \COMMENT{}

\begin{figure}[h]
\begin{center}
\includegraphics[width=.6\textwidth]{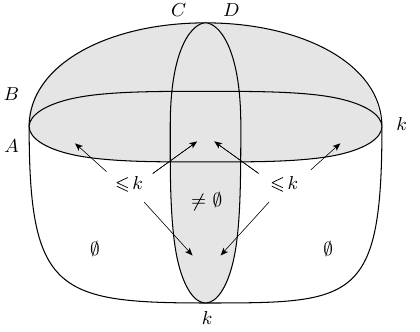}
\caption[Figure 1]{The separation \AB\ is \Tshaped-shaped}\label{pic_Tshaped}
   \vskip-12pt\vskip-0pt
\end{center}
\end{figure}

\begin{lem}\label{link_small}
If $(A,B)$ is a \Tshaped-shaped $k$-separation in~$G$, then $|A|\leq\dhk$.
\end{lem}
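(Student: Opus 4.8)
The plan is to use the counting lemma (Lemma~\ref{counting}) together with the defining data of a \Tshaped-shaped separation to bound $|A|$ from above by comparing the orders of the four ``quadrant'' separations obtained from \AB\ and its witness \CD.

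First I would set up notation. Let \AB\ be \Tshaped-shaped, witnessed by the proper $k$-separation \CD, so that $A\sm B\sub C\cap D$, $|A\cap C|\le k$ and $|A\cap D|\le k$. Applying Lemma~\ref{counting} to \AB\ and \CD\ gives that the orders of $(A\cap C,B\cup D)$ and $(B\cap D,A\cup C)$ sum to $|A\cap B|+|C\cap D|\le 2k$; similarly, applying it to \AB\ and \DC\ gives that the orders of $(A\cap D,B\cup C)$ and $(B\cap C,A\cup D)$ sum to at most $2k$. Since $|A\cap C|\le k$ and $|A\cap D|\le k$ by hypothesis, these do not immediately pin things down, so the key is to exploit that $A\sm B\sub C\cap D$: every vertex of $A\sm B$ lies in both $C$ and $D$.

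The main step is to decompose $A$ according to membership in $C\cap B$, $D\cap B$, and $A\cap B$, and to recognize that each piece is contained in one of the small separators. Concretely, I would argue that $A$ is covered by the three sets $A\cap C\cap D$, $A\cap B$ — wait, more carefully: write $A = (A\sm B)\cup(A\cap B)$. We have $A\sm B\sub C\cap D$, and $A\cap B$ contributes the separator of \AB. To bound $|A\sm B|$ I would intersect with the sides of \CD: since $A\sm B\sub C$ and $A\sm B\sub D$, the set $A\sm B$ lies in $C\cap D$, and I want to show it is small by relating it to $A\cap C\cap B$ and $A\cap D\cap B$ via the improper separations $(A\cap C,B\cup D)$ and $(A\cap D,B\cup C)$ guaranteed by the equivalent formulation in the text. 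The cleanest route: from $(A\cap C, B\cup D)$ improper we get either $A\cap C\sub B\cup D$ or $B\cup D\sub A\cap C$; the latter is absurd for a genuine separation of a nonempty graph (it would force $C\cup D\sub A\cap C\sub A$, collapsing \CD), so $A\cap C\sub B\cup D$, i.e. $A\cap C\sm D\sub B$. Symmetrically $A\cap D\sm C\sub B$. Hence $A\sm B\sub A\cap C\cap D$, and in fact $A\sm B\sub (A\cap C)\sm D \cup (A\cap C\cap D) \sub \ldots$; the upshot is $A \sub (A\cap C)\cup(A\cap D)$, because any $a\in A$ is either in $A\cap C$, in $A\cap D$, or (if in neither of $C,D$) must lie in $B$, and then... here I would instead directly bound: $|A| \le |A\cap C| + |A\cap D\sm C|$, and $|A\cap D\sm C|\le |A\cap D| - |A\cap C\cap D|$. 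Combining with $|A\cap C|\le k$, $|A\cap D|\le k$, and a lower bound $|A\cap C\cap D|\ge |A\cap B\cap C\cap D| \ge $ something like $\tfrac12 k$ coming from the counting lemma applied to \AB\ and \CD\ (the separator $A\cap B$ has size $k$ and must be split between the two quadrant separators, each of order $\le k$, forcing a large overlap), yields $|A|\le 2k - \tfrac12 k = \tfrac32 k$.

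The hard part will be the bookkeeping that produces the lower bound on $|A\cap C\cap D|$ (equivalently, that the ``loss'' $|A\cap C| + |A\cap D| - |A|$ is at least $\tfrac12 k$): one has to track exactly which vertices of the separator $A\cap B$ of size $k$ land in $C\sm D$, in $D\sm C$, and in $C\cap D$, and feed the orders of all four quadrant separations — each bounded by $k$ — into Lemma~\ref{counting} twice, once for the pair $\{\AB,\CD\}$ and once for $\{\AB,\DC\}$, then add the two resulting inequalities. I expect the final inequality $2|A| + |A\cap B| \le |A\cap C| + |A\cap C| + |A\cap D| + |A\cap D| \le 4k$ (or a close variant) to drop out after correctly identifying that $A$ is counted once in each of the two quadrant-order sums that it meets while $A\cap B$ is double-counted, giving $|A|\le \tfrac12(4k - k) = \tfrac32 k$. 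Getting the combinatorial identity relating these cardinalities exactly right, rather than the inequality direction, is where care is needed; everything else is routine set algebra using $A\cup B = V$ and the separation property.
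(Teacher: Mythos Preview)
Your inclusion--exclusion idea is right, but the key step you plan is wrong, and Lemma~\ref{counting} is a red herring here---the paper does not use it.

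You aim for a lower bound $|A\cap C\cap D|\ge\tfrac12 k$ (or even $|A\cap B\cap C\cap D|\ge\tfrac12 k$), arguing that the separator $A\cap B$ ``must be split between the two quadrant separators, forcing a large overlap''. This is false: nothing prevents $A\cap B$ from lying entirely in $(C\setminus D)\cup(D\setminus C)$. For a concrete counterexample take $k=6$, $|A\setminus B|=1$, and let the six vertices of $A\cap B$ split as five in $C\setminus D$ and one in $D\setminus C$; then $|A\cap C|=6$, $|A\cap D|=2$, both $\le k$, yet $|A\cap C\cap D|=1<k/2$ and $|A\cap B\cap C\cap D|=0$. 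So your final displayed inequality $2|A|+|A\cap B|\le 2|A\cap C|+2|A\cap D|$, which unwinds to $|A\cap C\cap D|\ge k/2$, cannot be established.

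The missing idea is that the correct lower bound on $|A\cap C\cap D|$ is not an absolute constant but $|A\setminus B|=|A|-k$, coming straight from the hypothesis $A\setminus B\subseteq C\cap D$ that you quoted but did not exploit. Feeding this into your own identity $|A|=|A\cap C|+|A\cap D|-|A\cap C\cap D|\le 2k-(|A|-k)$ gives $2|A|\le 3k$ in one line. This is exactly the paper's proof, just phrased slightly differently: the paper writes
\[
|A|\le |A\cap B|+|(C\cap D)\setminus B|\le k+\tfrac12(2k-k)=\tfrac32 k,
\]
where the bound $|(C\cap D)\setminus B|\le\tfrac12 k$ comes from noting that each of $A\cap C$ and $A\cap D$ contains $(C\cap D)\setminus B$ disjointly from its share of $A\cap B$, so $2|(C\cap D)\setminus B|\le (|A\cap C|+|A\cap D|)-|A\cap B|\le 2k-k$. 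No quadrant-separation orders, no Lemma~\ref{counting}, no ``hard part''.
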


\begin{proof}
Let $(C,D)$ witness that $(A,B)$ is \Tshaped-shaped. Then
\[\textstyle
|A|\leq |A\cap B|+|(C\cap D)\sm B|\leq k+{1\over2}(2k-k)=\dhk.\qedhere%
   \COMMENT{}
\]
\end{proof}

When a $k$-separation \AB\ is \Tshaped-shaped, no $(k+1)$-block of~$G$ can lie in~$A$: with \CD\ as above, it would have to lie in either $A\cap C$ or~$A\cap D$, but both these are too small to contain a $(k+1)$-block. Conversely, one may ask whether every proper $k$-separation \AB\ in a $k$-connected graph such that $A$ contains no $(k+1)$-block must be \Tshaped-shaped, or at least give rise to a \Tshaped-shaped $k$-separation $(A',B')$ with $A'\sub A$.\vadjust{\penalty-5000} This, however, is not true: some counterexamples are given in 
     Section~\ref{sec_FurtherEx}.

Interestingly, though, a global version of this does hold: a \Tshaped-shaped $k$-sep\-ar\-ation must occur \emph{somewhere} in every $k$-connected graph that has no $(k+1)$-block. More precisely, we have the following:%
   \COMMENT{}

\begin{lem}\label{lem_T-shaped}
If $G$ is $k$-connected, the following statements are equivalent:\vskip-6pt\vskip-6pt
\begin{enumerate}[\rm (i)]\itemsep0em
\item\label{itm1_T-shaped} every proper $k$-separation of $G$ separates two $(k+1)$-blocks;
\item\label{itm2_T-shaped} no $k$-separation of $G$ is \Tshaped-shaped.
\end{enumerate}
\end{lem}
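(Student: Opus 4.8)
The plan is to prove the two implications separately, using the characterisation of $\Tshaped$-shapedness given in the text: $(A,B)$ is $\Tshaped$-shaped witnessed by $(C,D)$ iff the separations $(A\cap C, B\cup D)$ and $(A\cap D, B\cup C)$ both have order $\le k$ and are improper.

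For $\neg\ref{itm1_T-shaped}\Rightarrow\neg\ref{itm2_T-shaped}$ (which I'd actually run as $\ref{itm2_T-shaped}\Rightarrow\ref{itm1_T-shaped}$): suppose some proper $k$-separation $(A,B)$ does \emph{not} separate two $(k+1)$-blocks. Since $G$ is $k$-connected, it has a $k$-lean \td\ by Theorem~\ref{k-lean}; but more directly I would invoke Theorem~\ref{canonicaltd} (or just work with the separation itself): I want to produce a witness $(C,D)$. The idea is that if no $(k+1)$-block lies on the $A$-side, then the $A$-side can be ``split'' by a further $k$-separation into two pieces each of order $\le k$, and the union of their separators with $A\cap B$ covers $A\sm B$; this is exactly a $\Tshaped$-shaped configuration. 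Concretely, take a $k$-lean \td; the part containing (the interior of) the $A$-side either meets $A\sm B$ in a set that already embeds into a $(k+1)$-block-carrying part — contradicting that no $(k+1)$-block lies in $A$ — or else there are two adhesion sets inside $A$ whose removal disconnects $A\sm B$, giving the two required proper $k$-separations $(C,D)$ and its partner. I would carve out $C\cap D$ as one of these small adhesion sets and verify $A\sm B\subseteq C\cap D$ together with $|A\cap C|,|A\cap D|\le k$ by Lemma~\ref{counting}.

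For $\neg\ref{itm2_T-shaped}\Rightarrow\neg\ref{itm1_T-shaped}$: suppose $(A,B)$ is a $\Tshaped$-shaped $k$-separation witnessed by $(C,D)$. I must exhibit a proper $k$-separation of $G$ that does \emph{not} separate two $(k+1)$-blocks. The natural candidate is $(A,B)$ itself, but it may fail to be proper in a useful way or $B$ might contain all the blocks on one side; the cleaner move is to use the remark already in the text: any $(k+1)$-block meeting $A$ would have to lie inside $A\cap C$ or inside $A\cap D$, and by Lemma~\ref{link_small}-type counting both of these have size $\le k$, hence contain no $(k+1)$-block. Therefore every $(k+1)$-block of $G$ lies in $B$ (more precisely, has all but $<k+1$ of its vertices in $B\sm A$, so is not separated from itself), so $(A,B)$ separates \emph{no} two $(k+1)$-blocks — and after possibly shrinking $(A,B)$ to a proper separation with the same separator we are done. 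The one subtlety is ensuring the separation we end with is genuinely proper and still has order $\le k$; if $A\subseteq B$ we instead pass to $(A\cap C, B\cup D)$ or note that $G$ being $k$-connected forces $|A\cap B|=k$ and properness of $(A,B)$ from $A\sm B\ne\es\ne B\sm A$.

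The main obstacle I anticipate is the forward direction $\ref{itm2_T-shaped}\Rightarrow\ref{itm1_T-shaped}$: turning ``the $A$-side contains no $(k+1)$-block'' into an explicit witnessing pair $(C,D)$ with all three order constraints simultaneously. A $k$-lean (or lean) \td\ is the right tool — its leanness gives, for any set of size $\ell\le k+1$ that is not $(<\ell)$-linkable, a small adhesion set on the relevant path — but one has to be careful to choose the leaf/edge of the decomposition tree on the $A$-side correctly and to check that the two separations one reads off are \emph{improper} (i.e.\ one side contained in the other), not merely of small order. Getting properness of $(C,D)$ itself (it must be a \emph{proper} $k$-separation by definition of $\Tshaped$-shaped) while keeping $A\sm B\subseteq C\cap D$ is the delicate point, and I expect the proof to spend most of its length there; everything else is bookkeeping with Lemma~\ref{counting}.
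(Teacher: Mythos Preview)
Your direction (i)$\Rightarrow$(ii) is essentially the paper's argument and is fine; note that $(A,B)$ is already proper by the definition of \Tshaped-shaped, so the ``shrinking'' worry at the end is unnecessary.

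Your direction (ii)$\Rightarrow$(i), however, has a genuine gap. Your plan is to take a proper $k$-separation $(A,B)$ with no $(k+1)$-block in~$A$ and manufacture a witness $(C,D)$ showing that \emph{this} $(A,B)$ is \Tshaped-shaped, in particular with $A\sm B\subseteq C\cap D$. But this need not be possible: the examples in Section~\ref{sec_FurtherEx} exhibit $k$-connected graphs with a proper $k$-separation $(A,B)$, $A$ even minimal, such that $A$ contains no $(k+1)$-block and yet $(A,B)$ is \emph{not} \Tshaped-shaped. So no matter how cleverly you read off adhesion sets from a $k$-lean decomposition, you will not be able to verify $A\sm B\subseteq C\cap D$ in general; the target you set yourself is simply false.

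The paper's argument is more indirect. It picks $(A,B)$ with $|A|$ minimum, applies Lemma~\ref{vx_block} to a vertex $v\in A\sm B$ and its $\ge k$ neighbours (all in~$A$) to obtain a proper $k$-separation $(C,D)$ with $v\in C\cap D$, and then runs a cross-diagram analysis: for each opposite pair of corners, minimality of $|A|$ together with Lemma~\ref{counting} and $k$-connectedness force one corner to be ``trivial'' (order $\le k$ and empty difference). The four combinations then imply that \emph{some one of} $(A,B)$, $(B,A)$, $(C,D)$, $(D,C)$ is \Tshaped-shaped --- but not necessarily $(A,B)$. The two ideas you are missing are the minimality choice and the willingness to land on a \Tshaped-shaped separation other than the one you started from.
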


\begin{proof}
We first assume~(\ref{itm1_T-shaped}) and show~(\ref{itm2_T-shaped}). If (\ref{itm2_T-shaped}) fails, then $G$~has a $k$-separation $(A,B)$ that is \Tshaped-shaped, witnessed by $(C,D)$ say. We shall derive a contradiction to~(i) by showing that $A$ contains no $(k+1)$-block. If $A$ contains a $(k+1)$-block, it lies in either $A\cap C$ or~$A\cap D$, since no two of its vertices are separated by $(C,D)$. 
By the definition of \Tshaped-shaped, none of these two cases can occur, a contradiction.

Let us now assume~(\ref{itm2_T-shaped}) and show~(\ref{itm1_T-shaped}). If (\ref{itm1_T-shaped}) fails, there is a proper $k$-separation $(A,B)$ such that $A$ contains no $(k+1)$-block.
Pick such an $(A,B)$ with $|A|$ minimum. Since $(A,B)$ is proper, there is a vertex $v\in A\sm B$.
Since $G$ is $k$-connected, $v$ has at least $k$ neighbours, all of which lie in~$A$.
As $A$ contains no $(k+1)$-block, Lemma~\ref{vx_block} implies that there is a proper $k$-separation $(C,D)$ that separates two of these neighbours. Then $v$ must lie in $C\cap D$.

We first show that either $(A\cap C, B\cup D)$ 
has order at most $k$ and $(A\cap C)\sm(B\cup D)=\es$ or $(B\cap D, A\cup C)$ has order at most $k$ and $(B\cap D)\sm(A\cup C)=\es$.
Let us assume that the first of these fails; then either $(A\cap C,B\cup D)$ has order~$>k$ or $(A\cap C)\sm(B\cup D)\ne\es$. In fact, if the latter holds then so does the former: otherwise $(A\cap C, B\cup D)$ is a proper $k$-separation that contradicts the minimality of $|A|$ in the choice of~$(A,B)$. (We have $|A\cap C| < |A|$, since $v$ has a neighbour in $A\sm C$.)%
   \COMMENT{}
  Thus, $(A\cap C,B\cup D)$ has order~$>k$. As $|A\cap B|+|C\cap D|=2k$, this implies by Lemma~\ref{counting} that the order of $(B\cap D, A\cup C)$ is strictly less than~$k$. As $G$ is $k$-connected, this means that $(B\cap D, A\cup C)$ is not a proper separation, i.e., that $(B\cap D)\sm(A\cup C)=\es$ as claimed.

By symmetry, we also get the analogous statement for the two separations $(A\cap D,B\cup C)$ and $(B\cap C,A\cup D)$. But this means that one of the separations $(A,B)$, $(B,A)$, $(C,D)$ and $(D,C)$ is \Tshaped-shaped,%
   \COMMENT{}
   contradicting~(\ref{itm2_T-shaped}).%
   \COMMENT{}
\end{proof}

Our next lemma says something about the size of the $(k+1)$-blocks we shall find.

\begin{lem}\label{lem_sepSepLargeKBlock}
If $G$ is $k$-connected and $|A|>\dhk$ for every proper $k$-separa\-tion $(A,B)$ of~$G$, then either $V$ is a $(k+1)$-block or $G$ has two $(k+1)$-blocks of size at least $\min\{\,|A| : (A,B)\text{ is a proper }k\text{-separation}\,\}$ that are connected in~$G$.
\end{lem}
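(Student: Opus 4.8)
The plan is to prove that every proper $k$-separation of $G$ whose first side has the least possible size has that side equal to a $(k+1)$-block, and then to apply this once to all proper $k$-separations and once to those whose second side contains the first block found.

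Since every \Tshaped-shaped $k$-separation $(A,B)$ satisfies $|A|\le\dhk$ by Lemma~\ref{link_small}, our hypothesis implies that $G$ has no \Tshaped-shaped $k$-separation at all. We may assume $V$ is not a $(k+1)$-block; as $G$ is $k$-connected this means two of its vertices are separated by exactly $k$ others, so $G$ does have proper $k$-separations, and we let $m$ be the least size of a side of one.

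The key step is the following. Fix a set $W\sub V$ (below we use $W=\es$ and $W=A$) and suppose $G$ has a proper $k$-separation $(C,D)$ with $W\sub D$; among all such, choose $(C,D)$ with $|C|$ minimum. I claim $C$ is then a $(k+1)$-block and $G[C]$ is connected. Suppose not. As $(C,D)$ is proper, $C\sm D\ne\es$ and $|C|\ge|C\cap D|+1=k+1$. If $C$ were \lekinsep\ it would extend to a $(k+1)$-block $b$; since $b$ is \lekinsep\ and $|C\cap D|=k$, $b$ meets only one of $C\sm D,\ D\sm C$, and as $b\supseteq C$ meets $C\sm D$ we get $b\sub C$, so $b=C$ — contradicting that $C$ is not a $(k+1)$-block. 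Hence $C$ is not \lekinsep, so there are a proper $k$-separation $(X,Y)$ and vertices $x\in X\sm Y$ and $y\in Y\sm X$ with $x,y\in C$; since $G$ is $k$-connected, $|X\cap Y|=k$. Note $C\cap X\psub C$ (it omits $y$) and $C\cap Y\psub C$ (it omits $x$). Now consider the four ``corner'' separations of $(C,D)$ and $(X,Y)$. If $(C\cap Y,D\cup X)$ were a proper $k$-separation, then, since $W\sub D\sub D\cup X$, it would be one of the separations over which we minimised, with first side strictly smaller than $C$ — a contradiction. Hence $(C\cap Y,D\cup X)$ is improper or of order $>k$; in the first case it is an improper separation of order $\le k$, and in the second, Lemma~\ref{counting} (which gives that the orders of $(C\cap Y,D\cup X)$ and $(D\cap X,C\cup Y)$ sum to $|C\cap D|+|X\cap Y|=2k$) together with $k$-connectivity makes $(D\cap X,C\cup Y)$ an improper separation of order $\le k$. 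The same argument with $X$ and $Y$ interchanged shows that $(C\cap X,D\cup Y)$ or $(D\cap Y,C\cup X)$ is an improper separation of order $\le k$. Combining the two dichotomies leaves four cases, and in each of them a suitable pair among the four corner separations is simultaneously improper and of order $\le k$; by the characterisation of \Tshaped-shapedness this makes one of $(C,D),(D,C),(X,Y),(Y,X)$ a \Tshaped-shaped $k$-separation, a contradiction. Thus $C$ is a $(k+1)$-block. Finally, $C\cap D$ separates the non-empty set $C\sm D$ from $D\sm C$, so, exactly as in the proof of Theorem~\ref{min_deg}, the fact that $C$ is \lekinsep\ forces $G[C\sm D]$ to be connected and every vertex of $C\cap D$ to have a neighbour in $C\sm D$; hence $G[C]$ is connected.

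To finish, apply the key step with $W=\es$: among all proper $k$-separations, one with first side of least size, $(A,B)$, has $|A|=m$, and $A$ is then a $(k+1)$-block connected in $G$. Now apply it with $W=A$; the relevant family is non-empty since it contains $(B,A)$, so we obtain a $(k+1)$-block $C$, connected in $G$, with $|C|\ge m$ (as $(C,D)$ is a proper $k$-separation). Moreover $C\ne A$, since $A\sub D$ gives $A\cap C\sub C\cap D$, so $|A\cap C|\le k<k+1\le|A|$ and hence $A\not\sub C$. So $A$ and $C$ are two $(k+1)$-blocks of size at least $m$ that are connected in $G$, as required.

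The main obstacle is the bookkeeping inside the key step: verifying that the four corner objects are indeed separations, invoking Lemma~\ref{counting} correctly in the ``order $>k$'' sub-cases, and — in each of the four combined cases — matching the resulting pair of improper corner separations precisely to the pair demanded by the definition of \Tshaped-shapedness for the appropriate one of $(C,D),(D,C),(X,Y),(Y,X)$. This is routine but delicate, and closely follows the proof of Lemma~\ref{lem_T-shaped}; degenerate cases such as $k=0$ (where the statement is essentially vacuous) should be disposed of at the outset.
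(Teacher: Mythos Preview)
Your proof is correct, but it follows a genuinely different route from the paper's. The paper invokes Theorem~\ref{k-lean} to obtain a $k$-lean \td\ $(\cT,\cV)$, and for each leaf $t$ shows that the part $V_t$ is itself a $(k+1)$-block; the crucial tool there is $k$-leanness, which forces any putative separating $k$-set inside $V_t$ to be crossed by $k+1$ disjoint paths. You instead dispense with Theorem~\ref{k-lean} altogether: you minimise $|C|$ directly over proper $k$-separations $(C,D)$ with a prescribed set on the $D$-side, and then rerun the corner-separation analysis from the proof of Lemma~\ref{lem_T-shaped} to show that a non-inseparable minimal $C$ would yield a \Tshaped-shaped separation. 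The two-stage application with $W=\es$ and then $W=A$ is a clean way to extract two distinct blocks.

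What each approach buys: the paper's argument is shorter once Theorem~\ref{k-lean} and Lemma~\ref{lem_T-shaped} are in hand, and the tree structure gives the two blocks for free (one per leaf). Your argument is more elementary---it needs only Lemmas~\ref{counting} and~\ref{link_small} plus the definition of \Tshaped-shaped---and in particular avoids the nontrivial existence result for $k$-lean decompositions; the price is that you effectively reprove Lemma~\ref{lem_T-shaped} inside your key step. Your bookkeeping in the four corner cases is correct (each case matches the improper-corner characterisation of \Tshaped-shapedness for exactly one of $(C,D),(D,C),(X,Y),(Y,X)$), and the distinctness argument $|A\cap C|\le k<|A|$ at the end is sound.
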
%
   \COMMENT{}

\begin{proof}
By assumption and Lemma \ref{link_small}, $G$~has no \Tshaped-shaped $k$-separation, so by Lemma~\ref{lem_T-shaped} every side of a proper $k$-separation contains a $(k+1)$-block.

By Theorem~\ref{k-lean}, $G$~has a $k$-lean \td\ $(\cT,\cV)$, with $\cV = (V_t)_{t\in \cT}$ say. Unless $V$ is a $(k+1)$-block, in which case we are done, this decomposition has at least two parts: since there exist two $(k+1)$-sets in $V$ that are separated by some $k$-separation,%
   \COMMENT{}
   the trivial \td\ with just one part would not be $k$-lean.

So $\cT$ has at least two leaves, and for every leaf $t$ the separation $\AB := \big(V_t, \bigcup_{t' \neq t}V_{t'}\big)$ is a proper $k$-separation. It thus suffices to show that $A=V_t$ is a $(k+1)$-block;%
   \COMMENT{}
  it will clearly be connected (as in the proof of Theorem~\ref{min_deg}).%
   \COMMENT{}

As remarked at the start of the proof, there exists a $(k+1)$-block $X\sub A$. If $X\ne A$, then $A$ has two vertices that are separated by a $k$-separation $(C,D)$; we may assume that $X\sub C$, so $X\sub A\cap C$.

If $(A\cap C,B\cup D)$ has order $\le k$, it is a proper separation (as $X\sub A\cap C$ has size~$>k$);%
   \COMMENT{}
   then its separator $S$ has size exactly~$k$, since $G$ is $k$-connected.
By the choice of~$(C,D)$ there is a vertex $v$ in $(D\sm C)\cap A$. The $k+1$ vertices of $S\cup\{v\}\sub A$ are thus separated in $G$ by the $k$-set $C\cap D$ from $k+1$ vertices in $X\sub A\cap C$, which contradicts the leanness of~$(\cT,\cV)$ for $V_t=A$.

So the order of $(A\cap C,B\cup D)$ is at least $k+1$. By
Lemma~\ref{counting}, the order of $(B\cap D,A\cup C)$ must then be less than $k$, so by the $k$-connectedness of $G$ there is no $(k+1)$-block in $B\cap D$.
   \COMMENT{}

The $(k+1)$-block $X'$ which $D$ contains (see earlier) thus lies in~$D\cap A$. So $A$ contains two $(k+1)$-blocks $X$ and~$X'$, and hence two vertex sets of size $k+1$, that are separated by $\CD$, which contradicts the $k$-leanness of~$\TV$.%
   \COMMENT{}%
   \COMMENT{}
\end{proof}

\begin{thm}\label{thm_m_wat}
If $G$ is $k$-connected and $\delta(G)> \dhk-1$,%
   \COMMENT{}
   then either $V$ is a $(k+1)$-block\penalty-200\ or $G$ has at least two $(k+1)$-blocks. These can be chosen to be connected in~$G$ and of size at least $\delta(G)+1$.
\end{thm}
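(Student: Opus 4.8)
The plan is to obtain Theorem~\ref{thm_m_wat} as a short consequence of Lemma~\ref{lem_sepSepLargeKBlock}: all the structural work (producing a $k$-lean \td\ and controlling the sides of $k$-separations via the \Tshaped-shaped machinery) has already been done there, so the only task is to verify that lemma's hypothesis and then read off the size bound. Following the convention used for Theorem~\ref{min_deg}, the case $k=0$ is disposed of at once~-- each component of $G$ is a $1$-block and contains at least $\delta(G)+1$ vertices~-- so from now on I assume $k\ge 1$, in which case $G$, being $k$-connected, is connected.

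The key (and essentially only) step is to show that every proper $k$-separation $(A,B)$ of $G$ has $|A|\ge\delta(G)+1$. Since $(A,B)$ is proper, there is a vertex $v\in A\sm B$; because $(A,B)$ is a separation, $v$ has no neighbour in $B\sm A$, so all of its at least $\delta(G)$ neighbours lie in $A$. Together with $v$ this gives $|A|\ge\delta(G)+1$. By hypothesis $\delta(G)>\dhk-1$, equivalently $\delta(G)+1>\dhk$, so in fact $|A|>\dhk$ for every proper $k$-separation $(A,B)$ of~$G$.

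This is exactly the hypothesis of Lemma~\ref{lem_sepSepLargeKBlock}. Applying it, we conclude that either $V$ is a $(k+1)$-block, or $G$ has two $(k+1)$-blocks that are connected in $G$ and have size at least $\min\{\,|A| : (A,B)\text{ a proper }k\text{-separation}\,\}$. In the second case this minimum is at least $\delta(G)+1$ by the previous paragraph, which is the asserted bound. In the first case $|V|\ge\delta(G)+1$ is immediate (take any vertex together with its neighbours) and $V$ is connected because $G$ is; so we are done in either case.

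I do not anticipate a genuine obstacle here, since the real content sits in Lemmas~\ref{lem_T-shaped} and~\ref{lem_sepSepLargeKBlock}. The one point that needs a moment's care is the elementary observation that a vertex lying in $A\sm B$ for a proper separation $(A,B)$ keeps all of its neighbours inside~$A$; this is precisely what converts the minimum-degree hypothesis into the bound $|A|>\dhk$ needed to invoke Lemma~\ref{lem_sepSepLargeKBlock}, and simultaneously yields the size estimate $\delta(G)+1$ for the blocks produced.
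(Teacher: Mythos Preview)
Your proof is correct and follows exactly the same route as the paper: verify that every proper $k$-separation $(A,B)$ satisfies $|A|\ge\delta(G)+1>\dhk$ by picking a vertex in $A\sm B$, and then invoke Lemma~\ref{lem_sepSepLargeKBlock}. You add a little more detail (the $k=0$ case and the size/connectedness of $V$ when $V$ itself is the $(k+1)$-block), but the argument is the same.
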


\begin{proof}
For every proper $k$-separation $(A,B)$ we have a vertex of degree~$>\dhk-1$ in $A\sm B$, and hence $|A|\geq\delta(G)+1>\dhk$. The assertion now follows from Lemma~\ref{lem_sepSepLargeKBlock}.
\end{proof}

To show that the degree bound in Theorem~\ref{thm_m_wat} is sharp, let us construct a $k$-connected graph $H$ with $\delta(H) = \lfloor \dhk-1\rfloor$ that has no $(k+1)$-block.

\begin{ex} \label{lange_Wurst}
Let $H_n$ be the ladder that is a union of $n\geq 2$ squares (formally: the cartesian product of a path of length~$n$ with a~$K^2$).

For even $k$, let $H$ be the lexicographic product of~$H_n$ and a complete graph $K=K^{k/2}$, i.e., the graph with vertex set $V(H_n)\times V(K)$ and edge set
\[\{\,(h_1,x)(h_2,y)\mid \text{either }h_1=h_2\text{ and }xy\in E(K)\text{ or }h_1h_2\in E(H_n)\,\},\]
see Figure~\ref{pic_Wurscht}. This graph $H$ is $k$-connected and has minimum degree~$\dhk-1$. But it contains no $(k+1)$-block: among any $k+1$ vertices we can find two that are separated in~$H$ by a $k$-set of the form $V_{h_1}\cup V_{h_2}$, where $V_h := \{(h,x)\mid x\in K\}$.%
   \COMMENT{}

\begin{figure}[h]
\begin{center}
\includegraphics[width=.75\textwidth]{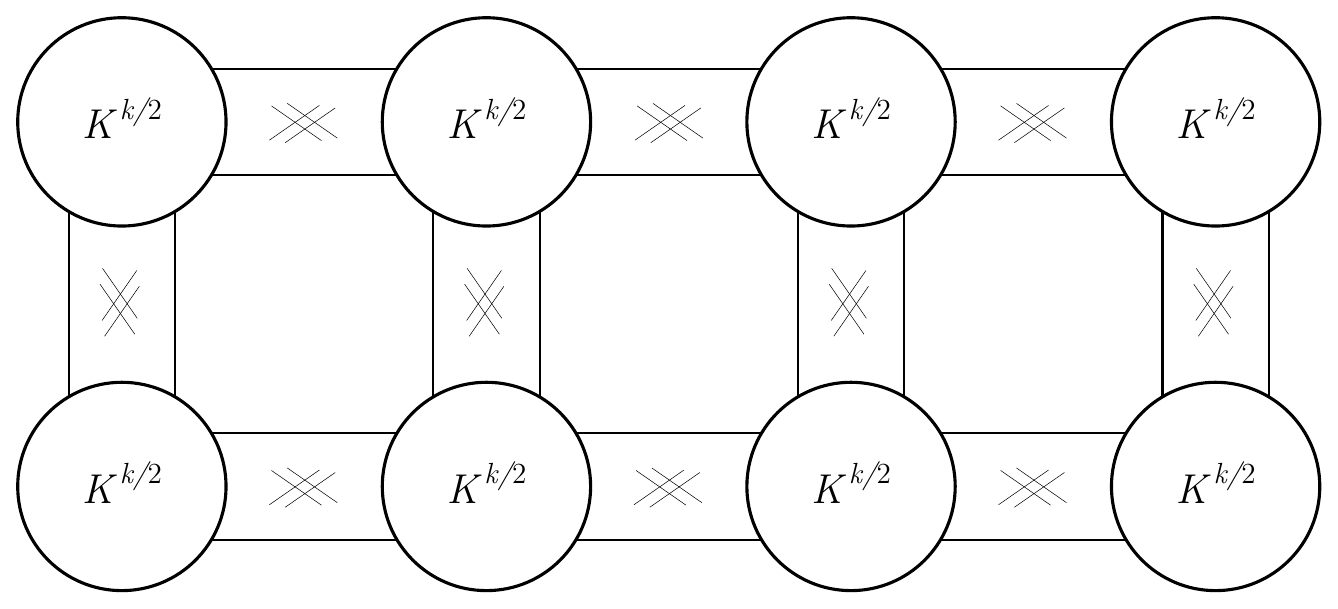}
\caption[Figure 1]{A $k$-connected graph without a $(k+1)$-block}\label{pic_Wurscht}
   \vskip-18pt\vskip-0pt
\end{center}
\end{figure}

If $k$ is odd, let $H'$ be the graph $H$ constructed above for $k-1$, and let $H$ be obtained from~$H'$ by adding a new vertex and joining it to every vertex of~$H'$. Clearly, $H$~is again $k$-connected and has minimum degree~$\lfloor \dhk-1\rfloor$, and it has no $(k+1)$-block since $H$ has no $k$-block.
\end{ex}

Our next example shows that the connectivity bound in Theorem~\ref{thm_m_wat} is sharp: we construct for every odd $k$ a $(k-1)$-connected graph $H$ of minimum degree~$\lfloor\dhk\rfloor$ whose largest $(k+1)$-blocks have size~$k+1 < \delta(H)+1$.

\begin{ex} \label{ex:m_wat}
Let $H_n$ be as in Example~\ref{lange_Wurst}. Let $H$ be obtained from~$H_n$ by replacing the degree-two vertices of~$H_n$ by complete graphs of order $(k+1)/2$ and its degree-three vertices by complete graphs of order $(k-1)/2$, joining vertices of different complete graphs whenever the corresponding vertices of $H_n$ are adjacent.
The minimum degree of this graph is $\lfloor\dhk\rfloor$, but it has only two $(k+1)$-blocks: the two $K^{k+1}$s at the extremes of the ladder.%
   \COMMENT{}
\end{ex}

We do not know whether the assumption of $k$-connectedness in Theorem~\ref{thm_m_wat} is necessary if we just want to force any $(k+1)$-block, not necessarily one of size~$\ge\delta+1$.

\medskip

If, in addition to being $k$-connected, $G$ contains no triangle, the minimum degree needed to force a $(k+1)$-block comes down to $k+1$, and the $(k+1)$-blocks we find are also larger:

\begin{thm}\label{thm_m_watK3free}
If $G$ is $k$-connected, $\delta(G)\geq k+1$, and $G$ contains no triangle, then either $V$ is a $(k+1)$-block or $G$ has at least two $(k+1)$-blocks. These can be chosen to be connected in~$G$ and of size at least $2\delta(G)$.
\end{thm}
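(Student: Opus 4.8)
The plan is to follow the template of the proof of Theorem~\ref{thm_m_wat}: use the triangle-freeness to show that \emph{every} proper $k$-separation $(A,B)$ of $G$ has a large side, namely $|A|\geq 2\delta(G)$, and then feed this straight into Lemma~\ref{lem_sepSepLargeKBlock}. Since $\delta(G)\geq k+1$ gives $2\delta(G)\geq 2(k+1)>\dhk$, the hypothesis of that lemma is satisfied, and its conclusion is exactly what the theorem asserts (either $V$ is a $(k+1)$-block, or there are two $(k+1)$-blocks, connected in~$G$), with the promised size bound coming from $\min\{\,|A|:(A,B)\text{ a proper }k\text{-separation}\,\}\geq 2\delta(G)$.

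So everything reduces to the inequality $|A|\geq 2\delta(G)$ for a proper $k$-separation $(A,B)$, and this is where triangle-freeness enters. Since $(A,B)$ is proper, pick a vertex $v\in A\sm B$. As $(A,B)$ is a separation, every neighbour of $v$ lies in~$A$; and $v$ has at least $\delta(G)\geq k+1$ neighbours, so, the separator $A\cap B$ having only $k$ vertices, at least one neighbour $u$ of $v$ lies in $A\sm B$. Then all neighbours of $u$ also lie in~$A$. Now $uv$ is an edge, so by triangle-freeness no vertex is adjacent to both $u$ and $v$; hence $N(u)$ and $N(v)$ are \emph{disjoint} subsets of~$A$, each of size at least $\delta(G)$, and therefore $|A|\geq |N(u)|+|N(v)|\geq 2\delta(G)$. (The connectedness of the resulting $(k+1)$-blocks is inherited from Lemma~\ref{lem_sepSepLargeKBlock}, hence ultimately from the argument in Theorem~\ref{min_deg}.)

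I do not expect a serious obstacle; the work is in spotting the right local configuration. Two small points have to be handled correctly: one needs the hypothesis $\delta(G)\geq k+1$ (not merely $\geq k$) precisely so that a neighbour of $v$ can be found outside the $k$-vertex separator $A\cap B$; and the triangle-freeness is exactly the property that forces $N(u)$ and $N(v)$ to be disjoint rather than possibly overlapping — which is why the bound $\delta(G)+1$ from Theorem~\ref{thm_m_wat} improves to $2\delta(G)$ here. Finally, it is worth remarking that this size bound is not diluted by the reduction: Lemma~\ref{lem_sepSepLargeKBlock} returns $(k+1)$-blocks of size at least the minimum of $|A|$ over all proper $k$-separations, which we have just bounded below by $2\delta(G)$.
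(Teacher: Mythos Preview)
Your proposal is correct and follows essentially the same approach as the paper's proof: show $|A|\geq 2\delta(G)$ for every proper $k$-separation by picking $v\in A\sm B$, finding a neighbour of~$v$ in $A\sm B$ (using $\delta(G)\geq k+1$), and using triangle-freeness to make their neighbourhoods disjoint; then apply Lemma~\ref{lem_sepSepLargeKBlock}. The paper's write-up is terser but the argument is identical.
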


\begin{proof}
Since $2\delta(G)>\dhk$, it suffices by Lemma~\ref{lem_sepSepLargeKBlock} to show that $|A|\ge 2\delta(G)$ for every proper $k$-separation~$(A,B)$ of~$G$. Pick a vertex $v\in A\sm B$. As $d(v)\geq k+1$, it has a neighbour $w$ in~$A\sm B$. Since $v$~and $w$ have no common neighbour, we deduce that $|A|\geq d(v)+d(w)\geq 2\delta(G)$.
\end{proof}

Any $k$-connected, $k$-regular, triangle-free graph%
   \COMMENT{}
   shows that the degree bound in Theorem~\ref{thm_m_watK3free} is sharp, because of the following observation:%
   \COMMENT{}

\begin{prop}
If $G$ is $k$-connected and $k$-regular, then $G$ has no $(k+1)$-block unless $G = K^{k+1}$ (which contains a triangle).
\end{prop}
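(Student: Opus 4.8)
The plan is to argue by contradiction. Assume $k\ge 1$ (for $k=0$ the statement is degenerate), suppose $G$ is $k$-connected and $k$-regular with $G\ne K^{k+1}$, and suppose for a contradiction that $G$ has a $(k+1)$-block~$B$; so $|B|\ge k+1$. The first step is to observe that $|V|\ge k+2$: a graph on at most~$k$ vertices has connectivity at most $k-1$, and the only graph on exactly $k+1$ vertices with connectivity $k$ is $K^{k+1}$ itself (any non-complete graph on $k+1$ vertices has two non-adjacent vertices, which the remaining $k-1$ vertices separate). Since $G$ is $k$-connected and not $K^{k+1}$, it must therefore have at least $k+2$ vertices. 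This step is what lets the separation argument below be non-vacuous.

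The heart of the argument is the following observation, to be applied to every $v\in B$. By $k$-regularity, $|N(v)|=k$, and $N(v)$ separates $v$ from $V\sm(N(v)\cup\{v\})$, which is non-empty because $|V|\ge k+2$. As $B$ is \lekinsep, the $k$-set $N(v)$ cannot separate two vertices of~$B$, so $B\sm N(v)$ lies entirely on one side of this separation; since $v\in B\sm N(v)$, that side is the one containing~$v$, forcing $B\sm N(v)=\{v\}$, i.e. $B\sub N(v)\cup\{v\}$. Now $|B|\ge k+1 = |N(v)\cup\{v\}|$ gives $B = N(v)\cup\{v\}$.

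To conclude, note that the previous paragraph, applied to each $v\in B$, says precisely that every vertex of~$B$ is adjacent in~$G$ to all other vertices of~$B$; hence $G[B]=K^{k+1}$, and by $k$-regularity every vertex of~$B$ already has all $k$ of its neighbours inside~$B$. Thus no edge of~$G$ joins~$B$ to $V\sm B$. But $G$ is connected and $B\psub V$ (as $|V|\ge k+2 > k+1 = |B|$), a contradiction. I do not expect a genuine obstacle here; the only point requiring care is the boundary case $|V|=k+1$, in which $N(v)\cup\{v\}$ exhausts~$V$ and the separation argument collapses — this is exactly the case disposed of at the start by identifying $K^{k+1}$ as the unique $k$-connected graph on $k+1$ vertices.
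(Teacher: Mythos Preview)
Your proof is correct and follows essentially the same approach as the paper's: both use that $N(v)$, having size~$k$, cannot separate $v$ from any other block vertex, forcing $B=N(v)\cup\{v\}$ for every $v\in B$, whence $G[B]=K^{k+1}$ and by $k$-regularity plus connectedness $G=K^{k+1}$. Your version is simply more explicit about the boundary case $|V|=k+1$ and the final connectedness step, which the paper compresses into ``it follows that $G=K^{k+1}$''.
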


\proof
Suppose $G$ has a $(k+1)$-block~$X$. Pick a vertex $x\in X$. The $k$ neighbours of $x$ in $G$ do not separate it from any other vertex of~$X$, so all the other vertices of $X$ are adjacent to~$x$. But then $X$ consists of precisely $x$ and its $k$ neighbours, since $|X|\ge k+1$. As this is true for every $x\in X$, it follows that $G = K^{k+1}$.
\endproof

If we strengthen our regularity assumption to transitivity (i.e., assume that for every two vertices $u,v$ there is an automorphism  mapping $u$ to~$v$),\vadjust{\penalty-500} then $G$ has no $(k+1)$-blocks, regardless of its degree:

\begin{thm}\label{trans}
If $\kappa(G)=k\ge 1$ and $G$ is transitive, then $G$ has no $(k+1)$-block unless $G=K^{k+1}$.
\end{thm}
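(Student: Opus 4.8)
The plan is to show that a vertex-transitive graph $G$ with $\kappa(G)=k$ and $G\neq K^{k+1}$ has no $(k+1)$-block, by arguing that any purported $(k+1)$-block $X$ would have to contain, for each of its vertices $x$, the closed neighbourhood $N[x]$ (since the $k$ neighbours of $x$ form a separator of order exactly $k=\kappa(G)$, so they cannot separate $x$ from any vertex of $X\setminus N[x]$, forcing $X\subseteq N[x]$), whence $|X|=k+1$ and $X=N[x]$; this is exactly the argument already run in the Proposition preceding the statement. So the real content is to rule out the case where some (equivalently, by transitivity, every) closed neighbourhood $N[x]$ is itself $(k+1)$-inseparable.

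The key step, then, is: if $G$ is transitive, $\kappa(G)=k$, and $N[x]$ induces — or rather, is — a $(k+1)$-block for one (hence every) vertex $x$, then $G=K^{k+1}$. First I would observe that $X=N[x]$ being $(k+1)$-inseparable means in particular that the $k$ neighbours of any $y\in N[x]$ do not separate two vertices of $N[x]$; applying the Proposition's argument with $y$ in place of $x$ gives $N[x]\subseteq N[y]$, and by symmetry $N[y]=N[x]$ for all $y\in N[x]$. Thus $N[x]$ is a clique: every two of its vertices are adjacent. Now I would use transitivity (actually connectivity alone suffices here): $G$ is connected, and $N[x]=N[y]$ for every $y$ adjacent to $x$ means the relation "$u,v$ have the same closed neighbourhood" has $N[x]$ as an equivalence class that is also closed under taking neighbours, so $N[x]$ has no edges leaving it; by connectedness $V=N[x]$, and since $N[x]$ is a clique, $G=K^{k+1}$.

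The main obstacle — and the only genuinely delicate point — is making precise the transition "$X$ is a $(k+1)$-block $\Rightarrow$ $X=N[x]$ for some $x$ $\Rightarrow$ $N[x]$ is a clique". The first implication is the Proposition's argument and is routine; the second implication needs care because a priori $X=N[x]$ does not immediately tell us the neighbours of $x$ are mutually adjacent. The resolution is to note that every vertex $y\in X$ is itself a vertex of the $(k+1)$-block $X$, so by the Proposition's argument run at $y$ we get $X\subseteq N[y]$, and since $|X|=k+1=|N[y]|$ we get $X=N[y]$; doing this for all $y\in X$ shows $X$ is a clique. After that the conclusion $G=K^{k+1}$ follows from connectedness alone, so transitivity is in fact only used to guarantee that if one vertex lies in a $(k+1)$-block then we may assume $x$ does — or, more cleanly, it is not even needed beyond $k$-connectedness for this direction; but stating the theorem with transitivity is harmless, and I would simply invoke the Proposition and the clique observation. (If the intended statement really does need transitivity to exclude some case the Proposition's argument leaves open, that case would have to involve a $(k+1)$-block strictly larger than $k+1$ vertices, which the $\kappa(G)=k$ separator-of-neighbours argument already forbids; so I expect no further case analysis is required.)
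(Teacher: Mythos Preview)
Your argument has a genuine gap: you repeatedly assume that every vertex $x$ has exactly $k$ neighbours, so that $N(x)$ is a separator of order $k=\kappa(G)$. But the hypothesis $\kappa(G)=k$ only yields $\delta(G)\ge k$; it does \emph{not} imply $k$-regularity, even for vertex-transitive graphs. For instance, the lexicographic product $C_5[K_2]$ is vertex-transitive with $\kappa=4$ but $\delta=5$; here $N(x)$ has five vertices, so the Proposition's neighbourhood-separator argument does not apply, and the chain ``$X\subseteq N[x]$, hence $|X|=k+1$, hence $X=N[y]$ for all $y\in X$'' collapses at the first step. Your closing remark that ``transitivity \dots\ is not even needed beyond $k$-connectedness'' should itself have been a warning sign: there are plenty of graphs with $\kappa(G)=k$ that do have $(k+1)$-blocks (take two copies of $K^{k+2}$ glued along a $K^k$), so any argument that dispenses with transitivity must be wrong. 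In fact, the paper derives the Mader--Watkins bound $\delta(G)\le\frac{3}{2}\kappa(G)-1$ for transitive $G$ as a \emph{corollary} of this theorem together with Theorem~\ref{thm_m_wat}; your argument, which implicitly assumes $\delta=\kappa$, is therefore assuming something strictly stronger than what the theorem is meant to help prove.

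The paper's proof takes a completely different route. It uses the $\Aut(G)$-invariant tree-decomposition of Theorem~\ref{canonicaltd}: if $G\neq K^{k+1}$ then $V$ is not a $(k+1)$-block, and by transitivity every vertex lies in a $(k+1)$-block, so there are at least two; the canonical decomposition distinguishing them therefore has at least two parts. Since adjacent parts overlap (as $k\ge 1$), some vertex lies in two parts, while a vertex in the interior of a leaf part lies in only one. No automorphism can map the former to the latter, contradicting transitivity. This argument genuinely exploits the group action on the decomposition and does not reduce to the $k$-regular case.
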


\begin{proof}
Unless~$G$ is complete (so that $G=K^{k+1}$), it has a proper $k$-separation. Hence $V$ is not a $(k+1)$-block. Let us show that $G$ has no $(k+1)$-block at all.

If $G$ has a $(k+1)$-block, it has at least two, since $V$ is not a $(k+1)$-block but every vertex lies in a $(k+1)$-block, by transitivity. Hence any \td\ that distinguishes all the $(k+1)$-blocks of $G$ has at least two parts. By Theorem~\ref{canonicaltd} there exists such a \td~\TV, which moreover has the property that every automorphism of $G$ acts on the set of its parts. As $k\ge 1$, adjacent parts overlap in at least one vertex, so $G$ has a vertex $u$ that lies in at least two parts. But $G$ also has a vertex $v$ that lies in only one part (as long as no part of the decomposition contains another, which we may clearly assume): if $t$ is a leaf of~$\cT$ and $t'$ is its neighbour in~$\cT$, then every vertex in $V_t\sm V_{t'}$ lies in no other part than~$V_t$ (see Section~\ref{basics}). Hence no automorphism of $G$ maps $u$ to~$v$, a contradiction to the transitivity of~$G$.
\end{proof}

Theorems \ref{thm_m_wat} and \ref{trans} together imply a well-known theorem of Mader~\cite{mader70}%
  \COMMENT{}
   and Watkins~\cite{Watkins70},%
   \COMMENT{}
   which says that every transitive graph of connectivity $k$ has minimum degree at most ${\dhk-1}$.

\section{\boldmath Average degree conditions forcing a $k$-block}\label{sec_AvDeg}

As before, let us consider a non-empty graph $G=(V,E)$ fixed throughout this section. We denote its average degree by~$d(G)$. As in the previous section, we shall assume that $k\ge 0$ and consider $(k+1)$-blocks, to improve readability.

As remarked in the introduction, Mader~\cite{mader72} proved that if $d(G)\ge 4k$ then $G$ has a $(k+1)$-connected subgraph. The vertex set of such a subgraph is \lekinsep, and hence extends to a $(k+1)$-block of~$G$. Our first aim will be to show that if we seek to force a $(k+1)$-block in $G$ directly, an average degree of $d(G)\ge 3k$ will be enough.

In the proof of that theorem, we may assume that $G$ is a minimal with this property, so its proper subgraphs will all have average degrees smaller than~$3k$. The following lemma enables us to utilize this fact. Given a set $S\sub V$, write $E(S,V)$ for the set of edges of~$G$ that are incident with a vertex in~$S$.

\begin{lem}\label{esti}
If $\lambda>0$ is such that $d(G)\geq 2\lambda > d(H)$ for every proper subgraph $H\ne\es$ of~$G$, then $|E(S,V)|>\lambda|S|$ for every set $\es\ne S\subsetneq V$.
\end{lem}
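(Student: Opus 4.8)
The plan is to count edges by deleting the vertices of $S$ one at a time and using the degree hypothesis on the successively smaller subgraphs. Concretely, I would write $S = \{v_1,\dots,v_s\}$ with $s = |S|$ and set $G_0 := G$, $G_i := G - \{v_1,\dots,v_i\}$ for $1 \le i \le s$; note that each $G_i$ with $i < s$ is a proper subgraph of $G$ (it is nonempty since $S \subsetneq V$ guarantees $G_s \ne \es$, so in particular all the intermediate $G_i$ are nonempty), so $d(G_i) < 2\lambda$ for $0 < i \le s-1$. When I pass from $G_{i-1}$ to $G_i$ I remove exactly $d_{G_{i-1}}(v_i)$ edges, and every edge of $G$ incident with some vertex of $S$ gets removed at exactly one of these $s$ steps; hence $|E(S,V)| = \sum_{i=1}^{s} d_{G_{i-1}}(v_i)$.

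The hard part, and the one place where I expect to have to be a little careful, is turning the average-degree bounds $d(G_{i-1}) < 2\lambda$ into a usable lower bound on the individual terms $d_{G_{i-1}}(v_i)$ — that direction is backwards, so a direct term-by-term comparison will not work. Instead I would bound the edges removed from below via the total edge count. Write $m := |E(G)|$ and $n := |V|$; then $d(G) = 2m/n \ge 2\lambda$ gives $m \ge \lambda n$. Similarly, if $G_s = G - S$ has $n - s$ vertices and $m'$ edges, then either $G_s$ is empty of edges, or $d(G_s) < 2\lambda$ gives $m' < \lambda(n-s)$; in the degenerate case $m' = 0 < \lambda(n-s)$ as well (using $s < n$), so in all cases $m' < \lambda(n-s)$. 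Since $|E(S,V)| = m - m'$ counts precisely the edges of $G$ with at least one endpoint in $S$, we obtain
\[
|E(S,V)| = m - m' > \lambda n - \lambda(n-s) = \lambda s = \lambda|S|,
\]
which is the claimed inequality.

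I should double-check the edge cases so the argument is clean: I need $S \ne \es$ for the statement to be nonvacuous (given), and I need $G_s = G - S \ne \es$, i.e.\ $S \ne V$, which is exactly the hypothesis $S \subsetneq V$ — this is what lets me apply $d(H) < 2\lambda$ to $H = G_s$ (or handle $m' = 0$ via $s < n$). No intermediate subgraph needs to be considered at all in this edge-counting version; the only subgraph of $G$ I actually invoke the hypothesis on is $G - S$ itself, which streamlines the write-up considerably compared to the peeling argument. This completes the proof.
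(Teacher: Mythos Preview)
Your proof is correct and essentially identical to the paper's: both use $|E(S,V)| = |E(G)| - |E(G-S)|$ together with $|E(G)|\ge\lambda|V|$ and $|E(G-S)|<\lambda|V\sm S|$, the only difference being that you argue directly while the paper argues by contradiction. The initial ``peeling'' setup is a detour you rightly abandon; the final argument invokes the hypothesis only on $H=G-S$, exactly as the paper does.
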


\begin{proof}
Suppose there is a set $\es\ne S\subsetneq V$ such that $|E(S,V)|\leq \lambda|S|$. Then our assumptions imply
\[|E(G-S)|=|E|-|E(S,V)|\geq\lambda|V|-\lambda|S|= \lambda |V\sm S|,\]
so the proper subgraph $G-S$ of~$G$ contradicts our assumptions.
\end{proof}

\begin{thm}\label{aver_deg}
If $d(G)\ge 3k$, then $G$ has a $(k+1)$-block. This can be chosen to be connected in~$G$ and of size at least~$\delta(G)+1-k$.
\end{thm}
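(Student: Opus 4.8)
The plan is to mimic the structure of the proof of Theorem~\ref{min_deg}, replacing the $k$-lean \td\ argument by a minimality argument that exploits Lemma~\ref{esti}. First I would dispose of the case $k=0$ (trivial) and assume $k\ge 1$. Then I would pass to a subgraph $G'\sub G$ that is edge-minimal subject to $d(G')\ge 3k$; any $(k+1)$-block of $G'$ is \lekinsep\ in $G'$ and hence (since a separator in $G'$ of a pair inside $G'$ is at most as large when we move to $G$? — no, that direction is wrong) — so instead I would be careful: a $(k+1)$-block found \emph{inside} $G'$ need not be \lekinsep\ in $G$. The right move is the standard one: prove the theorem for $G=G'$ edge-minimal with $d(G)\ge 3k$, and note that the statement we actually want is about this $G$ itself; for the general statement one applies the minimal-subgraph version and observes that being \lekinsep\ is not inherited upward, so in fact one proves directly that a minimal such $G$ has a connected $(k+1)$-block, and a general $G$ with $d(G)\ge 3k$ contains such a minimal $G'$, whose $(k+1)$-block, being a set of $\ge k+1$ vertices pairwise unseparated by $\le k$ vertices \emph{in $G'$}, is certainly pairwise unseparated by $\le k$ vertices in $G'$; to lift to $G$ one notes that the construction yields the block inside a small part and the rest of $G$ attaches only through a separator of size $\le k$ — so I would instead carry the whole argument out for $G$ itself using Lemma~\ref{esti} with $\lambda = \tfrac32 k$, which is legitimate only when $d(G)\ge 3k > d(H)$ for all proper $H$, i.e.\ after passing to a minimal subgraph; hence the cleanest route is: reduce to $G$ edge-minimal with $d(G)\ge 3k$, prove $G$ has a connected $(k+1)$-block of size $\ge \delta(G)+1-k$, and separately remark that for non-minimal $G$ the bound $\delta(G)+1-k$ is what the minimal subgraph delivers relative to its own $\delta$, which may differ — so the official statement should be read as: $G$ has a $(k+1)$-block, connected, and (when $G$ is taken minimal) of the stated size. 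I will write the proof for minimal $G$ and let the size bound refer to that $G$.

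So assume $G$ is edge-minimal with $d(G)\ge 3k$; then $d(H) < 3k$ for every proper nonempty subgraph $H$, and Lemma~\ref{esti} applies with $\lambda = \tfrac32 k = \dhk$: for every $\es\ne S\subsetneq V$ we have $|E(S,V)| > \dhk\,|S|$. By Theorem~\ref{k-lean}, $G$ has a $k$-lean \td\ $(\cT,\Vcal)$ with $\cV=(V_t)_{t\in\cT}$, in which no part contains another. If $\cT$ has a single node then $V$ itself is the \lekinsep\ set we want (it is inseparable as there is no $k$-separation, $G$ being edge-minimal of high degree hence highly enough connected — or rather: if the one-part decomposition is $k$-lean, then any two $(k+1)$-sets in $V$ are linked by $k+1$ paths, so $V$ is \lekinsep), and $|V|\ge\delta+1\ge\delta+1-k$. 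Otherwise pick a leaf $t$ of $\cT$ and let $A_t = V_t\cap\bigcup_{t'\ne t}V_{t'}$ be its attachment set, so $|A_t|\le k$ and $\mathring V_t = V_t\sm A_t\ne\es$. The key new step: I claim $\mathring V_t\ne\es$ forces $|V_t|$ to be large. Indeed $A_t$ separates $\mathring V_t$ from the rest of $G$, so every edge incident to a vertex of $\mathring V_t$ lies inside $V_t$; thus $E(\mathring V_t, V) \sub E(G[V_t])$, and applying Lemma~\ref{esti} with $S = \mathring V_t$ gives $|E(G[V_t])| \ge |E(\mathring V_t,V)| > \dhk\,|\mathring V_t|$. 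Combined with $|E(G[V_t])| \le \binom{|V_t|}{2}$ and $|V_t| \le |\mathring V_t| + k$, this yields a lower bound of the form $|\mathring V_t| \ge$ (something growing with $k$); a cleaner consequence, and the one I actually want, is that $|V_t| > \dhk$, after which I am exactly in the situation of the proof of Lemma~\ref{lem_sepSepLargeKBlock}. The main obstacle is making this counting precise and checking it does give $|V_t| > \dhk$ in all cases, including small ones.

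Having established that every leaf part $V_t$ satisfies $|V_t| > \dhk$, I note the separation $(V_t, \bigcup_{t'\ne t}V_{t'})$ is a proper $k$-separation with its $A$-side of size $> \dhk$; but I cannot yet invoke Lemma~\ref{lem_sepSepLargeKBlock} wholesale because that lemma's hypothesis is that \emph{every} proper $k$-separation has side $> \dhk$, which I have not shown — only for those coming from leaves. Instead I would re-run the leaf argument of Lemma~\ref{lem_sepSepLargeKBlock}'s proof directly: show $\mathring V_t$ is \lekinsep\ using $k$-leanness (if two vertices $v,v'\in\mathring V_t$ were separated by $\le k$ vertices, pick $k$-neighbour-sets $N\sub N(v)$, $N'\sub N(v')$ — these exist and lie in $V_t$ since $\delta\ge\ldots$? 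I need $\delta(G)\ge k$, which follows from $d(G)\ge 3k$ and edge-minimality via Lemma~\ref{esti} applied to $S=\{x\}$: $d(x) = |E(\{x\},V)| > \dhk \ge k$) — and then $k$-leanness gives $k+1$ disjoint paths between $N\cup\{v\}$ and $N'\cup\{v'\}$, contradicting the separation; so $\mathring V_t$ is \lekinsep, in particular connected, extends to a $(k+1)$-block $B$, and $B\sub V_t$ since $A_t$ separates $\mathring V_t$ from $G\sm V_t$; every vertex of $B$ in $A_t$ sends an edge to $\mathring V_t$ (else the rest of $A_t$ would separate it), so $B$ is connected; finally $|B|\ge|\mathring V_t| \ge \delta(G)+1-k$ because each vertex of $\mathring V_t$ has $\ge\delta(G)-k$ neighbours inside $\mathring V_t$ (its $\ge\delta(G)$ neighbours all lie in $V_t$, and at most $|A_t|\le k$ of them lie in $A_t$). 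This is verbatim the endgame of Theorem~\ref{min_deg}'s proof. I expect the genuine content — and the only place a reader should slow down — to be the counting step showing $\mathring V_t\ne\es$ together with Lemma~\ref{esti} forces $|V_t| > \dhk$ (equivalently, forces $|\mathring V_t|$ to be reasonably large); everything after that is a transcription of arguments already in the paper.
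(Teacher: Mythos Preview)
Your approach matches the paper's: pass to a subgraph minimal with $d\ge 3k$, apply Lemma~\ref{esti} with $\lambda=\dhk$ to get $\delta(G)>\dhk$, take a $k$-lean \td, and at a leaf~$t$ run the endgame of Theorem~\ref{min_deg}. Two places where you wobble are worth straightening out.

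Your worry about lifting is misplaced. A \lekinsep\ set in a subgraph $G'\sub G$ \emph{is} \lekinsep\ in~$G$: any $(\le k)$-set separating two of its vertices in~$G$ restricts (by intersecting with $V(G')$) to a $(\le k)$-set separating them in~$G'$. Connectedness lifts trivially. So the reduction to a minimal subgraph is harmless; the only residual imprecision concerns whether $\delta$ in the size bound refers to the original or the minimal graph, and the paper shares this.

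For the counting step you flag as the ``main obstacle'', the paper bounds $|E(\mathring V_t,V)|$ directly rather than via $E(G[V_t])$: if $|\mathring V_t|\le k$ then, since also $|A_t|\le k$,
\[
|E(\mathring V_t,V)|\ \le\ \tbinom{|\mathring V_t|}{2}+k\,|\mathring V_t|\ \le\ \dhk\,|\mathring V_t|,
\]
contradicting Lemma~\ref{esti}; hence $|\mathring V_t|\ge k+1$. Your target $|V_t|>\dhk$ is neither what this computation yields nor what the endgame needs. Indeed you already have all you need without any extra counting: from $\delta(G)>\dhk$ each vertex of $\mathring V_t$ has more than $\tfrac{k}{2}$ neighbours in~$\mathring V_t$, so $|\mathring V_t|\ge 2$, and this together with $\delta(G)\ge k$ is precisely what the endgame of Theorem~\ref{min_deg} uses. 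The detour through Lemma~\ref{lem_sepSepLargeKBlock} is unnecessary.
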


\begin{proof}
If $k=0$, then the assertion follows directly.
So we assume $k>0$.
Replacing $G$ with a subgraph if necessary, we may assume that $d(G)\geq 3k$ but $d(H)<3k$ for every proper subgraph $H$ of~$G$. By Lemma~\ref{esti}, this implies that $|E(S,V)|>\dhk|S|$ whenever $\es\ne S\subsetneq V$; in particular, $\delta(G)>{3\over2}k$.

Let $(\cT,\Vcal)$ be a $k$-lean \td\ of $G$, with $\cV = (V_t)_{t\in \cT}$ say. Pick a leaf $t$ of~$\cT$. (If $\cT$ has only one node, let $t$ be this node.) Then $\mathring{V}_t\ne\es$ by~(T3), since $V_t$ is not contained in any other part of~\cV.

If $|\mathring{V}_t|\leq k$ then, as also $|V_t\sm\mathring{V}_t|\leq k$,
\[\textstyle
 |E(\mathring{V}_t,V)|\leq {1\over2}|\mathring{V}_t|^2+ k\,|\mathring{V}_t|
\leq |\mathring{V}_t|\big(|\mathring{V}_t|/2+k\big)\leq \dhk\, |\mathring{V}_t|,
\]
which contradicts Lemma~\ref{esti}.%
   \COMMENT{}
   So $|\mathring{V}_t|\geq k+1\ge 2$. The set $\mathring{V}_t$ extends to a ${(k+1)}$-block~$B\sub V_t$ with the desired properties as in the proof of Theorem~\ref{min_deg}.
\end{proof}

Since our graph of Example~\ref{lange_Wurst} contains no $(k+1)$-block, its average degree is a strict lower bound for the minimum average degree that forces a $(k+1)$-block. By choosing the ladder in the construction of that graph long enough, we can make its average degree exceed $2k-1-\epsilon$ for any $\epsilon>0$. The minimum average degree that will force a $(k+1)$-block thus lies somewhere between $2k-1$ and~$3k$.

\begin{prob}\label{oque2}
Given~$k\in\N$, determine the smallest value $d_k$ of $d$ such that every graph of average degree at least~$d$ has a $k$-block.

\end{prob}

As we have seen, an average degree of $3k$ is sufficient to force a graph to contain a $(k+1)$-block.
If we ask only that the graph should have a minor that contains a $(k+1)$-block, then a smaller average degree suffices:

\begin{thm}\label{avminors}
Let $G$ be a graph with $n\geq k+1$ vertices and at least
$$(k-1)(n-(k+1))+ \binom{k+1}{2}$$
 edges. Then $G$ has a minor which has a $(k+1)$-block of size at least $\delta(G)+1$.
\end{thm}

\begin{proof}
Replacing $G$ with a minor of itself if necessary, we may assume that $G$ has at least
$(k-1) (|G|-(k+1))+ \binom{k+1}{2}$ edges but every proper minor $H$ of $G$ has less than 
$(k-1) (|H|-(k+1))+ \binom{k+1}{2}$ edges or else $G=K_{k+1}$.

We show that any two adjacent vertices $v$ and $w$ have at least $k-1$ common neighbours. If $G=K_{k+1}$, this is clear. Otherwise contracting the edge $vw$ we lose one vertex and at most $k-1$ edges. This contradicts the minimality of $G$. So $v$ and $w$ have at least $k-1$ common neighbours.

Let $({\cal T},{\cal V})$ be a $k$-lean \td\ of~$G$, with $\cV = (V_t)_{t\in {\cal T}}$ say, and let $t$ be a leaf of~${\cal T}$. (If ${\cal T}$ has only one node, let $t$ be this node.) We shall prove that $V_t$ is \lekinsep, and hence a $(k+1)$-block, in~$G$.

{\lineskiplimit-3pt
As $({\cal T},{\cal V})$ is $k$-lean, every vertex $a\in A_t := V_t\cap\bigcup_{t'\ne t} V_{t'}$ has a neighbour~$v$ in~$\mathring{V}_t$, as otherwise $X:=A_t\sm\{a\}$ would separate $A_t$ from every set $X\cup\{v\}$ with $v\in\mathring{V}_t$, which contradicts $k$-leanness since $|X\cup\{v\}|=|A_t|\le k$. As $a$ and~$v$ have $k-1$ common neighbours in~$G$, which must lie in~$V_t$, we find that every vertex in~$A_t$ has at least $k$ neighbours in~$V_t$.
Since every other vertex of~$V_t$ is incident with at least one edge, it also must has at least $k$ neighbours, which must be in~$V_t$.

}As $\mathring{V}_t\ne\es$ and hence $|V_t|\ge \delta(G)+1\ge k+1$, it suffices to show that two vertices $u,v\in V_t$ can never be separated in~$G$ by $\le k$ other vertices. But this follows from $k$-leanness: pick a set $N_u$ of $k$ neighbours of $u$ in~$V_t$ and a set $N_v$ of $k$ neighbours of~$v$ in~$V_t$ to obtain two $(k+1)$-sets $N_u\cup\{u\}$ and $N_v\cup\{v\}$ that are joined in $G$ by $k+1$ disjoint paths; hence $u$ and $v$ cannot be separated by $\le k$ vertices.
\end{proof}

Theorem~\ref{avminors} is best possible, since there are graphs with one edge
less than stated none of whose minors has a $(k+1)$-block:

\begin{ex} \label{minor_ex} 
Let $G$ be obtained by joining $n-(k-1)$ new vertices completely to a~$K_{k-1}$. This graph has $n\geq k+1$ vertices and $(k-1) (n-(k+1))+ \binom{k+1}{2}-1$ edges; this can be seen by rewriting it as a $K_{k+1}$
minus an edge $vw$ to which $n-(k+1)$ new vertices have been added, each joined to all the vertices of the $K_{k+1}$ other than $v$ and~$w$.

Let $H$ be a minor of $G$, and let $S$ be the set of those vertices of $H$ whose branch set meets the original $K_{k-1}$ in the definition of~$G$. Since all the components of $H-S$ are singleton vertices, $H$ cannot contain a $(k+1)$-block. 
  \end{ex}

   \COMMENT{}

\section{Blocks and tangles}\label{sec_Tangles}

In this section we compare $k$-blocks with tangles, as introduced by Robertson and Seymour~\cite{GMX}. Our reason for doing so is that both notions have been advanced as possible approximations to the elusive ``$(k+1)$-connected pieces'' into which one might wish to decompose a $k$-connected graph, in analogy to its tree-like block-cutvertex decomposition (for $k=1$), or to Tutte's \td\ of 2-connected graphs into 3-connected torsos (for $k=2$) \cite{ReedConnectivityMeasure, confing}.

Let us say that a set $\theta$ of separations of order at most $k$ of a graph $G=(V,E)$ is a \emph{tangle of order $k$} of~$G$ if
\begin{enumerate}[($\theta$1)]
\item\label{cT_complete} for every separation $(A,B)$ of order $< k$ of~$G$ either $(A,B)$ or $(B,A)$ is in~$\theta$;\looseness=-1
\item\label{cT_small} for all $(A_1,B_1),(A_2,B_2),(A_3,B_3) \in \theta$ we have $G[A_1]\cup G[A_2] \cup G[A_3] \neq G.$
\end{enumerate}
It is straightforward to verify that this notion of a tangle is consistent with the one given in \cite{GMX}.%
   \COMMENT{}

\goodbreak

Given a tangle~$\theta$, we think of the side $A$ of a separation $(A,B)\in\theta$ as the {\em small side\/} of~$(A,B)$, and of $B$ as its {\em large side\/}. (Thus, axiom ($\theta2$) says that $G$ is not the union of the subgraphs induced by at most three small sides.) If a set $X$ of vertices lies in the large side of every separation in~$\theta$ but not in the small side, we say that $X$ {\em gives rise to\/} or {\em defines\/} the tangle~$\theta$.

If $X$ is a \kinsep\ set of vertices, it clearly lies in exactly one of the two sides of any separation of order~${< k}$. Hence if we define $\theta$ as the set of those separations \AB\ of order~${< k}$ for which $X\sub B$, then $\theta$ satisfies~($\theta1)$, and $V$ is not a union of at most two small sides of separations in~$\theta$. But it might be the union of three small sides, and indeed $\theta$ may fail to satisfy~$(\theta2)$.%
   \COMMENT{}
   So $X$ might, or might not, define a tangle of order at most~$k$.

An $(n\times n)$-grid minor of~$G$, with $n\ge k$, also gives rise to a tangle of order~$k$ in~$G$, but in a weaker sense: for every separation \AB\ of $G$ of order less than~$k$, exactly one side meets a branch set of every {\em cross\/} of the grid, a union of one column and one row. (Indeed, since crosses are connected and every two crosses meet, we cannot have one cross in $A\sm B$ and another in~$B\sm A$.)\looseness=-1

Since $G$ can contain a large grid without containing a $k$-block (Example~\ref{grid}), it can thus have a large-order tangle but fail to have a $k$-block for any $k\ge 5$. Conversely, Examples \ref{TKn} and~\ref{ex_treeBlock} show that $G$ can have $k$-blocks for arbitrarily large~$k$ without containing any tangle (other than those of order~$\le\kappa(G)$, in which the large side of every separation is all of~$V$). For example, if $G$ is a subdivided~$K^n$ with $n\ge k+1$,%
   \COMMENT{}
   then its branch vertices form a $k$-block~$X$, but when $n\le {3\over2}(k-1)$ the separations of order~$< k$ whose large sides contain $X$ do not form a tangle, since $G$ is the union of three small sides of such separations (each with a separator consisting of two thirds of the branch vertices; compare~\cite[(4.4)]{GMX}).

Any $k$-block of size $>{3\over2}(k-1)$, however, does give rise to a tangle of order~$k$:

\begin{thm}\label{block_vs_tangle}
Every \kinsep\ set of more than $\frac{3}{2}(k-1)$ vertices in $G=(V,E)$ defines a tangle of order $k$.
\end{thm}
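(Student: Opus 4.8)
Let $X$ be a $(<k)$-inseparable set with $|X| > \frac{3}{2}(k-1)$, and let $\theta$ be the set of all separations $(A,B)$ of order $<k$ for which $X \subseteq B$. As observed in the discussion preceding the theorem, $X$ lies in exactly one side of every separation of order $<k$, so $\theta$ satisfies $(\theta1)$; moreover $V$ is never the union of two small sides of separations in $\theta$ (if $A_1 \cup A_2 = V$ with $X \subseteq B_1 \cap B_2$, then $X \subseteq (A_1 \cap B_2) \cup (A_2 \cap B_1) \cup (A_1 \cap A_2)$\ldots actually the cleanest argument is via the order count below). The whole content of the theorem is therefore to verify $(\theta2)$: that $G[A_1] \cup G[A_2] \cup G[A_3] \ne G$ for all $(A_i,B_i) \in \theta$.

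The plan is to argue by contradiction: suppose $(A_1,B_1),(A_2,B_2),(A_3,B_3) \in \theta$ with $G[A_1] \cup G[A_2] \cup G[A_3] = G$, equivalently $A_1 \cup A_2 \cup A_3 = V$. First I would reduce to a small number of vertices dominating $X$: since $X \subseteq B_i$ for each $i$, every vertex $x \in X$ with $x \in A_i$ must actually lie in the separator $A_i \cap B_i$ of the $i$-th separation. Because $A_1 \cup A_2 \cup A_3 = V \supseteq X$, this shows $X \subseteq (A_1 \cap B_1) \cup (A_2 \cap B_2) \cup (A_3 \cap B_3)$, so $X$ is covered by three separators each of size $\le k-1$, giving $|X| \le 3(k-1)$. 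That alone is not a contradiction, so the bound $|X| > \frac{3}{2}(k-1)$ must be exploited more carefully, by combining two of the three separations into a new one using Lemma~\ref{counting}.

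The key step is the following. From $A_1 \cup A_2 \cup A_3 = V$ we get $A_1 \cup A_2 \supseteq V \setminus A_3 \supseteq B_3 \setminus A_3$, and since $X \subseteq B_3$ with $X$ meeting no separator prematurely, one shows $X \cap (A_1 \cup A_2)$ is contained in the separator of the ``combined'' separation $(A_1 \cap A_2,\, B_1 \cup B_2)$ or its partner $(B_1 \cap B_2,\, A_1 \cup A_2)$. Concretely: consider the two separations $s = (A_1 \cap A_2, B_1 \cup B_2)$ and $s' = (B_1 \cap B_2, A_1 \cup A_2)$ furnished by Lemma~\ref{counting}, whose orders sum to $|A_1 \cap B_1| + |A_2 \cap B_2| \le 2(k-1)$. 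Since $X \subseteq B_1 \cap B_2$, the set $X$ lies on the $B_1 \cup B_2$-side of $s$; hence if $s$ has order $<k$ then $s \in \theta$, and now $\{s, (A_3,B_3)\}$ are two small sides whose union of $A$-sides is $(A_1 \cap A_2) \cup A_3 \supseteq$ \ldots — this still need not be all of $V$. The correct combinatorial move is: pick the two of the three separations, say the first two, whose separators meet $X$ in the most vertices; then at least $\lceil \frac{2}{3}|X|\rceil > (k-1)$ vertices of $X$ lie in $(A_1 \cap B_1) \cup (A_2 \cap B_2)$, and these all lie on the small side $A_1 \cup A_2$ of $s'$ while also lying in $X \subseteq B_1 \cap B_2$, i.e.\ in the separator of $s'$; so $s'$ has order $> k-1$, whence $s$ has order $< k$ and $s \in \theta$. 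Then $A_1 \cap A_2$ together with $A_3$ covers $V$ (since any vertex in $B_1 \cup B_2$ but outside $A_1 \cap A_2$ that is also outside $A_3$ would have to lie in $A_1 \triangle A_2$, and one checks such a vertex is dominated) — and iterating this reduction replaces the triple $\{(A_1,B_1),(A_2,B_2),(A_3,B_3)\}$ by the pair $\{s,(A_3,B_3)\}$ of separations in $\theta$ whose small sides still cover $G$, contradicting the already-established fact that $V$ is not a union of two small sides of members of $\theta$.

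\textbf{Expected main obstacle.} The genuinely delicate point is the covering bookkeeping: turning ``$A_1 \cup A_2 \cup A_3 = V$'' into ``$(A_1 \cap A_2) \cup A_3 = V$'' after replacing the first two separations by their meet $s$, while simultaneously controlling that $s$ has order $<k$. The pigeonhole step ($\frac{2}{3}|X| > k-1$) pins down the order of $s$, but one must check that passing to $s$ does not lose coverage of $V$ — i.e.\ that every vertex of $V$ that was in $A_1 \cup A_2$ but leaves $A_1 \cap A_2$ is recaptured, either by $A_3$ or by being forced into the separator structure. I expect this to require a short case analysis on where a vertex $v \in (A_1 \setminus A_2) \cap (B_2)$ sits relative to the $A_3/B_3$ split, using that $(A_1,B_1)$ is a separation (no edges from $A_1 \setminus B_1$ to $B_1 \setminus A_1$) to rule out the bad configurations; once the pair $\{s,(A_3,B_3)\}$ is in hand, the two-sides contradiction is immediate from Lemma~\ref{counting} applied once more with $X \subseteq B$ on both sides forcing the complementary separation to have order $\ge 2|X|/2 > k-1 \ge$ its actual order, which is impossible.
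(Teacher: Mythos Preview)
Your plan has a genuine gap at the coverage step. You replace the triple $\{(A_i,B_i)\}$ by the pair $\{s,(A_3,B_3)\}$ with $s=(A_1\cap A_2,\,B_1\cup B_2)$, but from $A_1\cup A_2\cup A_3=V$ one cannot conclude $(A_1\cap A_2)\cup A_3=V$: any vertex in $(A_1\setminus A_2)\setminus A_3$ is simply lost, and nothing in the setup forbids such vertices. Your parenthetical ``one checks such a vertex is dominated'' is precisely where a real argument is needed, and there is none --- the separation axioms give no reason for a vertex of $A_1\setminus A_2$ to fall into~$A_3$. The corner that \emph{would} preserve coverage is the join $(A_1\cup A_2,\,B_1\cap B_2)$, but that is your~$s'$, whose order you have just argued is~$\ge k$, so it is not in~$\theta$. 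The endgame is also short: even granting two separations $(A',B'),(A_3,B_3)\in\theta$ with $A'\cup A_3=V$, you only obtain $X\subseteq(A'\cap B')\cup(A_3\cap B_3)$, hence $|X|\le 2(k-1)$, which is perfectly compatible with $|X|>\tfrac32(k-1)$; your final Lemma~\ref{counting} inequality is a contradiction only for $k\le 3$. (A smaller point: with ``pick the two separators meeting $X$ in the most vertices'' you get $x_1+x_2\ge\tfrac23\sum x_i$, not $|X\cap((A_1\cap B_1)\cup(A_2\cap B_2))|\ge\tfrac23|X|$; overlap can shrink the union. Choosing instead the index~$3$ that minimizes the number of $X$-vertices lying \emph{only} in $A_3\cap B_3$ repairs this, but the main gap above remains.)

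The paper's proof avoids the submodular reduction altogether and uses the bound $|X|>\tfrac32(k-1)$ via the opposite pigeonhole: since $\sum_i|X\cap(A_i\cap B_i)|\le 3(k-1)<2|X|$, some $v\in X$ lies in at most one separator, say $v\notin A_2\cap B_2$ and $v\notin A_3\cap B_3$; as $v\in X\subseteq B_2\cap B_3$, this means $v\notin A_2\cup A_3$. Now $(<k)$-inseparability is used directly: $(A_1\cap B_1)\setminus\{v\}$ has fewer than $k$ vertices, so it cannot separate $v$ from a vertex $w\in X\setminus A_1$, forcing $v$ to have a neighbour $u\in B_1\setminus A_1$. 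The edge $vu$ lies in none of $E(A_1),E(A_2),E(A_3)$, which establishes~$(\theta2)$ without ever assuming $A_1\cup A_2\cup A_3=V$.
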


\begin{proof}
Let $X$ be a \kinsep\ set of more than $\frac{3}{2}(k-1)$ vertices, 
and consider the set ${\theta}$ of all separations $(A,B)$ of order less than~$k$ with $X\sub B$.
We show that ${\theta}$ is a tangle. As no two vertices of $X$ can be separated by a separation in~${\theta}$, it satisfies ($\theta$\ref{cT_complete}).
For a proof of~($\theta2$), it suffices to consider three arbitrary separations $(A_1,B_1), (A_2,B_2), (A_3,B_3)$ in~${\theta}$ and show that
\[
E(A_1)\cup E(A_2)\cup E(A_3)\not\supseteq E,\eqno(*)
\]
where $E(A_i)$ denotes the set of edges that $A_i$ spans in~$G$.

As $|X|>\frac{3}{2}(k-1)$, there is a vertex $v\in X$ that lies in at most one of the three sets ${A_i\cap B_i}$, say neither in~$A_2\cap B_2$ nor in~$A_3\cap B_3$. Let us choose $v$ in~$A_1$ if possible.%
   \COMMENT{}
  Then, as $X\sub B_1$, there is another vertex $w\ne v$ in $X\sm A_1$.
As $v$ and $w$ lie in~$X$, the set $(A_1\cap B_1)\sm\{v\}$ does not separate them.
Hence there is an edge $vu$ with $u\in B_1\sm A_1$. Since $v\notin A_2\cup A_3$,%
   \COMMENT{}
   the edge $vu$ is neither in $E(A_2)$ nor in $E(A_3)$.
But $vu$ is not in $E(A_1)$ either, as $u\in B_1\sm A_1$, completing the proof of~$(*)$.
\end{proof}

\section{\boldmath Finding $k$-blocks in polynomial time}\label{sec_Algo}

We consider graphs $G=(V,E)$, with $n$ vertices and $m$ edges, say, and positive integers $k < n$.%
   \COMMENT{}
   We shall present a simple algorithm that finds all the $k$-blocks of~$G$ in time polynomial in $n$, $m$ and~$k$.
We start our algorithm with the following step, 
which we call \emph{pre-processing}.

For two vertices $x,y$ of $G$ let $\kappa(x,y)$ denote the smallest size of a set of other vertices that separates $x$ from~$y$ in~$G$.
We construct a graph $H_k$ from~$G$ by adding, for every pair of non-adjacent vertices $x,y$, the edge $xy$ if $\kappa(x,y) \ge k$, that is, if $x$ and $y$ cannot be separated by fewer than $k$ other vertices.
Moreover, we label every non-edge $xy$ of $H_k$ by some separation of order $\kappa(x,y)< k$ that separates $x$ from~$y$ in~$G$. This completes the pre-processing.

Note that all separations of order~$< k$ of $G$ are still separations of~$H_k$,%
   \COMMENT{}
   and that the $k$-blocks of~$G$ are the vertex sets of the maximal cliques of order~$\ge k$ in~$H_k$.

\goodbreak

\begin{lem}\label{prepro_run}
 The pre-processing has running time $O(\min\{k, \sqrt n\}\cdot m\cdot n^2)$.%
   \COMMENT{}
\end{lem}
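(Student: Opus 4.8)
The plan is to bound the running time by counting, for each vertex pair we need to process, the cost of a single max-flow (min-cut) computation in an auxiliary unit-capacity network, and then multiplying by the number of pairs.

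First I would recall the standard vertex-connectivity computation: to compute $\kappa(x,y)$ for non-adjacent $x,y$ in $G$, one splits each vertex $v\ne x,y$ into an in-copy and an out-copy joined by a unit-capacity arc, replaces each edge by a pair of unit-capacity arcs between the appropriate copies, and computes a maximum $x$--$y$ flow. Since we only care whether $\kappa(x,y)\ge k$ or not, and want a separator of order $<k$ when $\kappa(x,y)<k$, it suffices to run the flow augmentation for at most $k$ rounds: after $\min\{k,\kappa(x,y)\}$ augmenting paths we either have a flow of value $k$ (certifying $\kappa(x,y)\ge k$, so we add the edge $xy$ to $H_k$) or we have saturated a cut of size $<k$, which via the usual correspondence yields a separation of $G$ of order $\kappa(x,y)<k$ separating $x$ from $y$, which is the label we attach. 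Each augmenting path is found by a BFS in the residual network in time $O(m+n)=O(m)$ (the network has $O(n)$ vertices and $O(m+n)$ arcs; we may assume $G$ connected so $m\ge n-1$, hence $O(m)$). Thus each pair costs $O(k\cdot m)$ by this naive bound. Alternatively, by the Even--Tarjan bound on unit-capacity networks, the whole max-flow on such a network runs in $O(\sqrt{n}\cdot m)$ time regardless of the flow value; taking the better of the two gives $O(\min\{k,\sqrt n\}\cdot m)$ per pair.

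Then I would simply observe there are fewer than $n^2$ pairs $\{x,y\}$ of non-adjacent vertices, so the total running time is $O(\min\{k,\sqrt n\}\cdot m\cdot n^2)$, as claimed. Constructing the residual network for each pair and reading off the final edge-or-label takes time subsumed by the flow computation, so there is nothing else to account for.

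The main obstacle is not conceptual but expository: one must be careful to phrase the reduction from vertex connectivity to edge-disjoint flow (the vertex-splitting gadget) and the extraction of an actual vertex separator of order $<k$ from a saturated cut, and to justify the two alternative per-pair bounds $O(k\cdot m)$ (BFS-augmenting, $k$ rounds) and $O(\sqrt n\cdot m)$ (Even--Tarjan for unit-capacity networks) so that the $\min$ is legitimate. None of this is deep, but it is the only place where care is needed; everything else is a one-line multiplication by the number of pairs.
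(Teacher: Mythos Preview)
Your proposal is correct and follows essentially the same approach as the paper: reduce vertex connectivity to max-flow via the standard vertex-splitting network, observe that each pair costs $O(k\cdot m)$ by stopping after $k$ augmentations and $O(\sqrt{n}\cdot m)$ by the unit-capacity bound, and multiply by $O(n^2)$ pairs. The paper phrases this in terms of Dinitz's algorithm specifically (its inner loop gives the $O(k\cdot m)$ bound, its full run the $O(\sqrt{n}\cdot m)$ bound), and handles the $O(m+n)=O(m)$ issue by counting only non-isolated vertices rather than assuming connectedness, but these are cosmetic differences.
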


\begin{proof}
We turn the problem of finding a minimal vertex separator between two vertices into one of finding a minimal edge cut between them. This is done in the usual way (see e.g.\ Even~\cite{Even79}) by constructing a unit-capacity network $G'$ from~$G$ with $n' = 2\tilde n$ vertices and $m' = 2m + \tilde n$ directed edges, where $\tilde n = O(m)$ is the number of non-isolated vertices of~$G$.

For every non-edge $xy$ of $G$ we start Dinitz's algorithm (DA) on~$G'$, which is designed to find an $x$--$y$ separation of order $\kappa(x,y)$. If DA completes $k$ iterations of its `inner loop' (finding an augmenting path), then $\kappa(x,y) \ge k$; we then stop DA and let $xy$ be an edge of~$H_k$. Otherwise DA returns a separation \AB\ of order~$<k$; we then keep $xy$ as a non-edge of~$H_k$ and label it by \AB.
Since the inner loop has time complexity $O(m') = O(m)$ and DA has an overall time complexity of $O(\sqrt{n'} \cdot m') = O(\sqrt n \cdot m)$ (see e.g.~\cite{graphcon}), this establishes the desired bound.
\end{proof}

Now we describe the \emph{main part of the algorithm}. We shall construct a rooted tree~\cT, inductively by adding children to leaves of the tree constructed so far. We maintain two lists: a list~\cL\ of some of the leaves of the current tree, and a list~$\cB$ of subsets of~$V$. We shall change~\cL\ by either deleting its last element or replacing it with two new elements that will be its children in our tree. Whenever we add an element~$t$ to~\cL\ in this way, we assign it a set $X_t\sub V$. Think of the current list~\cL\ as containing those $t$ whose $X_t$ we still plan to scan for $k$-blocks of~$G$, and of \cB\ as the set of $k$-blocks found so far.

We start with a singleton list $\cL = (r)$ and $\cB=\es$, putting $X_r = V$.

At a given step, stop with output $\cB$ if $\cL$ is empty; otherwise consider the last element $t$ of~\cL.
If $|X_t|< k$, delete $t$ from~\cL\ and do nothing further at this step.

Assume now that $|X_t|\ge k$.
If $X_t$ induces a complete subgraph in~$H_k$, add $X_t$ to~\cB, delete $t$ from~\cL, and do nothing further at this step.

If not, find vertices $x,y\in X_t$ that are not adjacent in~$H_k$. At pre-processing, we labeled the non-edge $xy$ with
a separation $(A,B)$ of order $< k$ that separates $x$ from~$y$ in~$G$ (and in~$H_k$).
Replace $t$ in~\cL\ by two new elements $t'$ and~$t''$, making them children of~$t$ in the tree under construction, and let $X_{t'} = X_t\cap A$ and $X_{t''} = X_t\cap B$. If $|X_t| > k$, do nothing further at this step. If $|X_t|=k$, then both $X_{t'}$ and $X_{t''}$ have size~$<k$; we delete $t'$ and $t''$ again from~$\cL$ and do nothing further in this step.%
   \COMMENT{}

This completes the description of the main part of the algorithm. Let $\cT$ be the tree with root~$r$ that the algorithm constructed: its nodes are those $t$ that were in~\cL\ at some point, and its edges were defined as nodes were added to~\cL.

\begin{prop}\label{Bs_kblocks}
The main part of the algorithm stops with output $\cB$ the set of $k$-blocks of~$G$.
\end{prop}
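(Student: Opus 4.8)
The plan is to prove two inclusions: every set added to~$\cB$ is a $k$-block of~$G$, and every $k$-block of~$G$ is added to~$\cB$. Since the $k$-blocks of~$G$ are exactly the vertex sets of the maximal cliques of order~$\ge k$ in~$H_k$ (as noted just before Lemma~\ref{prepro_run}), it suffices to argue about cliques of~$H_k$. A set is added to~$\cB$ only when it equals some $X_t$ of size~$\ge k$ that induces a complete subgraph of~$H_k$; so each element of~$\cB$ is a clique of order~$\ge k$ in~$H_k$. For these to be exactly the \emph{maximal} such cliques, the key structural claim to establish first is an invariant on the sets $X_t$: for every node~$t$ of the constructed tree~$\cT$, the set $X_t$ contains every $k$-block of~$G$ that is contained in~$X_s$ for each ancestor~$s$ of~$t$; equivalently, running down from the root, no $k$-block is ever lost.

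First I would prove this invariant by induction along~$\cT$. The root has $X_r = V$, which contains every $k$-block trivially. For the inductive step, suppose $t$ has children $t'$, $t''$ with $X_{t'} = X_t\cap A$ and $X_{t''} = X_t\cap B$, where $(A,B)$ is the separation of order~$<k$ labelling a non-edge $xy$ with $x,y\in X_t$. Let $\beta$ be a $k$-block of~$G$ with $\beta\sub X_t$. Since $\beta$ is \kinsep\ and $(A,B)$ has order~$<k$, no two vertices of~$\beta$ are separated by the separator $A\cap B$; hence $\beta\sm(A\cap B)$ lies entirely in~$A\sm B$ or entirely in~$B\sm A$, so $\beta\sub A$ or $\beta\sub B$. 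In the first case $\beta\sub X_t\cap A = X_{t'}$, in the second $\beta\sub X_{t''}$. This proves the invariant, and in particular every $k$-block of~$G$ is contained in~$X_t$ for every node~$t$ on the branch that the algorithm follows for it; since the algorithm only stops exploring a branch at~$t$ when $|X_t|<k$ (impossible, as a $k$-block has $\ge k$ vertices) or when $X_t$ induces a complete subgraph of~$H_k$ (in which case $X_t$ is added to~$\cB$), we conclude that every $k$-block~$\beta$ of~$G$ satisfies $\beta\sub X_t$ for some $X_t\in\cB$. Because $\beta$ is itself a clique of~$H_k$ of order~$\ge k$ and $X_t$ is such a clique containing~$\beta$, maximality of~$\beta$ forces $\beta = X_t$, so $\beta\in\cB$.

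It remains to check the converse: every $X_t$ added to~$\cB$ is a \emph{maximal} clique of~$H_k$ of order~$\ge k$, hence a $k$-block. Suppose $X_t\in\cB$ but $X_t$ is not maximal, so there is a vertex $z\in V\sm X_t$ with $X_t\cup\{z\}$ a clique of~$H_k$; then $X_t\cup\{z\}$ is \kinsep\ in~$G$ and extends to a $k$-block~$\beta\supseteq X_t\cup\{z\}$. By the previous paragraph $\beta = X_{t'}$ for some $X_{t'}\in\cB$, and by the invariant $\beta$ was contained in~$X_s$ at every ancestor~$s$ of its branch. The point to nail down is that the algorithm, once it reaches a node~$s$ with $X_s = X_t$ that induces a complete subgraph of~$H_k$, stops at~$s$; so $X_t$ cannot have been a proper subset of any later~$X_{t'}$ along a common branch, and distinct branches arise only from a genuine non-edge split, which (as shown above) sends any given $k$-block wholly into one side. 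Combining these observations yields $\beta\sub X_t$, contradicting $z\notin X_t$. Hence $X_t$ is a maximal clique of order~$\ge k$ in~$H_k$, i.e.\ a $k$-block of~$G$.

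I expect the main obstacle to be the bookkeeping in the converse direction: making the argument that ``no $k$-block straddles a split and the algorithm never discards a node whose $X_t$ is already complete'' fully rigorous requires carefully formalizing what ``the branch followed for~$\beta$'' means and verifying that termination at complete nodes is compatible with the invariant. One clean way to organize this is to observe that along any root-to-leaf path the sets $X_t$ are nested and strictly decreasing at each split (since $x\in A\sm B$ or $y\in B\sm A$ ensures $X_{t'},X_{t''}\psub X_t$), so the process is finite, and then to phrase the whole proof as: \emph{the leaves of~$\cT$ whose sets have size~$\ge k$ are precisely the maximal cliques of~$H_k$ of order~$\ge k$}, proving $\sub$ by the completeness test and $\supseteq$ by the invariant plus maximality. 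The termination/finiteness remark also feeds the later running-time analysis, so it is worth isolating.
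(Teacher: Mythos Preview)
Your proof is correct and follows the same route as the paper: both reduce the claim to showing that $\cB$ equals the set of maximal cliques of order $\ge k$ in~$H_k$, which are the $k$-blocks by construction of~$H_k$. The difference is only in level of detail: the paper dispatches this in two lines (``The algorithm clearly stops with $\cB$ the set of vertex sets of the maximal cliques of~$H_k$ that have order~$\ge k$''), whereas you spell out the invariant that no $k$-block is ever split across the two children, the termination argument via strict decrease of the~$X_t$, and the maximality check. Your converse direction is a bit informally phrased in the middle paragraph, but the clean reformulation you give at the end (trace the root-to-$t$ path and use $|X_t|\ge k > |A\cap B|$ to force $\beta$ into the child leading to~$t$ at every split) is exactly what is needed to make it rigorous.
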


\begin{proof}
The algorithm  stops with $\cB$ the set of vertex sets of the maximal cliques of~$H_k$ that have order~$\ge k$. These are the $k$-blocks of~$G$, by definition of~$H_k$.
\end{proof}

To analyse running time, we shall need a lemma that is easily proved by induction. A~\emph{leaf} in a rooted tree is a node that has no children, and a \emph{branching node} is one that has at least two children.%
   \COMMENT{}

\begin{lem}\label{tree_timple}
 Every rooted tree has more leaves than branching nodes.\qed
\end{lem}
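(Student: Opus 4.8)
The statement to prove is Lemma~\ref{tree_timple}: every rooted tree has more leaves than branching nodes.

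\medskip

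The plan is to prove this by induction on the number of nodes of the rooted tree~$\cT$. For the base case, a rooted tree with a single node has one leaf (the root) and no branching nodes, so $1 > 0$ holds. For the induction step, I would consider a rooted tree $\cT$ with more than one node and delete a leaf $v$ to obtain a smaller rooted tree $\cT'$, to which the induction hypothesis applies. I then need to track how the numbers of leaves and branching nodes change when passing from $\cT'$ back to $\cT$. Writing $L(\cT), B(\cT)$ for the numbers of leaves and branching nodes of $\cT$, the induction hypothesis gives $L(\cT') > B(\cT')$, i.e.\ $L(\cT') \geq B(\cT') + 1$, and it suffices to show $L(\cT) - L(\cT') \geq B(\cT) - B(\cT')$.

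\medskip

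The key is to choose $v$ well: I would pick $v$ to be a leaf of $\cT$ at maximum distance from the root, and let $p$ be its parent in~$\cT$. Then I split into two cases according to how many children $p$ has in~$\cT$. If $p$ has at least two children in~$\cT$, then deleting $v$ decreases the leaf count by one (we lose $v$, and $p$ does not become a leaf since it retains at least one child), and decreases the branching-node count by at most one, so the inequality $L(\cT) - L(\cT') \geq B(\cT) - B(\cT')$ holds. If $p$ has exactly one child in~$\cT$ (namely $v$), then in $\cT'$ the node $p$ becomes a leaf, so the leaf count is unchanged ($L(\cT') = L(\cT)$), and $p$ was not a branching node of $\cT$, so deleting $v$ does not change the branching-node count either ($B(\cT') = B(\cT)$); in fact in this case $L(\cT') = L(\cT) > B(\cT) = B(\cT')$ directly. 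In both cases the desired inequality follows, completing the induction.

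\medskip

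I do not expect any serious obstacle here; the only point requiring a little care is the bookkeeping of which nodes gain or lose the status of leaf or branching node when $v$ is removed, and ensuring the chosen $v$ makes this bookkeeping clean. (Choosing a deepest leaf is a convenient way to guarantee that $p$'s other children, if any, are themselves leaves, though this is not strictly needed for the counting argument above — it suffices that $p$ either keeps or loses its branching status in a controlled way.) An alternative, equally short approach would be to induct on the number of branching nodes, or to argue directly via the handshaking-type identity relating the number of leaves to $\sum_{v}(\deg^+(v)-1)$ over internal nodes; but the leaf-deletion induction is the most elementary and self-contained.
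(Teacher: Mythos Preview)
Your proof is correct. The paper does not give a proof of this lemma at all (it is marked with a bare \qed\ and introduced only with the remark that it ``is easily proved by induction''), so your induction on the number of nodes is exactly the kind of argument the authors had in mind; there is nothing to compare.
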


\goodbreak

\begin{lem}\label{main_run}
The main part of the algorithm stops after at most $4(n-k)$ steps.
Its total running time is $O(\min\{m,n\}\cdot n^2)$.
\end{lem}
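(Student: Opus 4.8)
The running-time bound splits naturally into two parts: bounding the number of steps, and bounding the cost per step. For the step count, the key observation is that each step either deletes the last element $t$ from~$\cL$ (this happens in the three terminating cases: $|X_t|<k$, $X_t$ complete in $H_k$, or $|X_t|=k$ with children deleted) or replaces $t$ by two genuine children $t',t''$ with $|X_{t'}|,|X_{t''}|<|X_t|$ and $|X_t|>k$. Branching steps are in bijection with the branching nodes of the final tree $\cT$, and non-branching processing steps (the deletions) are bounded by the number of leaves of $\cT$ — indeed by the number of nodes ever placed in $\cL$ that are subsequently removed, which is all of them, i.e.\ $|V(\cT)|$. So first I would show that $\cT$ has at most $2(n-k)+1$ nodes, and then that the total number of steps is at most twice that, giving $4(n-k)$.

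To bound $|V(\cT)|$: at a branching node $t$ we have $|X_t|>k$ and we pass to children with $|X_{t'}|=|X_t\cap A|<|X_t|$ and likewise for $t''$, while every child retains at least $\ldots$ — more carefully, the useful invariant is that along any root-to-leaf branch the values $|X_t|$ are strictly decreasing at branching nodes, and a node $t$ with $|X_t|<k$ is immediately deleted without issuing children, so every non-leaf $t$ has $|X_t|\ge k$ and in fact $|X_t|>k$ (if $|X_t|=k$ the children are deleted, so $t$ effectively behaves as a leaf, or one simply observes its two children are leaves). Combining this with Lemma~\ref{tree_timple} (more leaves than branching nodes) and the bound that each branching node strictly decreases a quantity bounded between $k+1$ and $n$ along each branch, one gets that $\cT$ has at most $n-k$ branching nodes, hence fewer than $2(n-k)$ leaves by a standard binary-tree count (leaves $=$ branching nodes $+1$ when all internal nodes are binary), and so at most $4(n-k)$ nodes and steps in total. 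The precise constant-chasing here is the one slightly fiddly point, but it is exactly the kind of induction flagged before Lemma~\ref{tree_timple}.

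For the per-step cost: each step, after the $O(1)$ list bookkeeping, requires (a) testing whether $X_t$ induces a complete subgraph of $H_k$, and if not (b) exhibiting a non-adjacent pair $x,y\in X_t$. Scanning the pairs in $X_t$ until a non-edge is found, or certifying there is none, costs $O(|X_t|^2)=O(n^2)$ adjacency look-ups in $H_k$; forming $X_{t'}=X_t\cap A$ and $X_{t''}=X_t\cap B$ from the stored separation costs $O(n)$. Since $H_k$ was built at pre-processing (Lemma~\ref{prepro_run}) and its adjacency can be queried in $O(1)$, each step is $O(n^2)$; if $G$ — equivalently $X_t$ — has few edges one can instead bound the completeness test by $O(\min\{m,n\}\cdot n)$ per step by iterating over the vertices of $X_t$ and their (few) non-neighbours, which is where the $\min\{m,n\}$ in the stated bound comes from. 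Multiplying the $O(4(n-k))=O(n)$ step bound by the $O(\min\{m,n\}\cdot n)$ per-step cost yields the claimed $O(\min\{m,n\}\cdot n^2)$.

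The main obstacle I anticipate is purely the combinatorial accounting in the first part: getting the clean constant $4(n-k)$ rather than some larger multiple. The delicate cases are the boundary behaviour when $|X_t|=k$ (children created then immediately deleted — do these count as one step or three?) and the root, and making sure the strictly-decreasing-size argument is applied to the right quantity (the sizes $|X_t|$ at branching nodes along a branch, which lie in $\{k+1,\dots,n\}$) so that the number of branching nodes is at most $n-k$. Once that bookkeeping is pinned down, everything else is a routine traversal-cost estimate using the pre-processed $H_k$.
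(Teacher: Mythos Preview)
Your plan has two genuine gaps, one in each half of the argument.

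\textbf{Counting the steps.} Your bound on the number of branching nodes does not follow from what you wrote. The observation that $|X_t|$ strictly decreases at each branching node along any root-to-leaf path only bounds the \emph{depth} of~$\cT$ by $n-k$; it says nothing about the total number of branching nodes, which in a binary tree of depth $d$ can be as large as $2^d-1$. The ingredient you are missing is the inequality
\[
|X_{t'}|+|X_{t''}| \;=\; |X_t\cap A|+|X_t\cap B| \;\le\; |X_t|+|A\cap B| \;\le\; |X_t|+(k-1),
\]
which uses that the separation $(A,B)$ has order~$<k$. This is what the paper exploits: defining the potential $|X_t|-k$, one shows by induction that below any branching node~$t$ of the tree $\cT'$ (obtained from~$\cT$ by deleting the children of nodes with $|X_t|=k$) there are at most $2(|X_t|-k)$ leaves. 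Applied to the root this gives at most $2(n-k)$ leaves, hence by Lemma~\ref{tree_timple} at most $4(n-k)-1$ nodes of~$\cT'$, each corresponding to one step. Your strictly-decreasing argument alone cannot recover any linear bound, let alone the constant~$4$.

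\textbf{The $\min\{m,n\}$ factor.} Your explanation for this factor is incorrect. The completeness test at each step is performed in~$H_k$, not in~$G$, and $H_k$ may be much denser than~$G$ (indeed, a $k$-block induces a clique in~$H_k$ regardless of how sparse it is in~$G$). So ``iterating over the few non-neighbours'' does not help. The paper's argument is simpler: each step costs $O(n^2)$, and there are $O(n-k)$ steps, giving $O(n^3)$. The $\min\{m,n\}$ arises because one may first delete all isolated vertices of~$G$; the remaining graph has at most $2m$ vertices, so the bound $O(n^3)$ becomes $O(\min\{m,n\}\cdot n^2)$.
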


\begin{proof}
Each step takes $O(n^2)$ time, the main task being to check whether $H_k[X_t]$ is complete.%
   \COMMENT{}
   It thus suffices to show that there are no more than $4(n-k)$ steps as long as $n\le 2m$, which can be achieved by deleting isolated vertices.%
   \COMMENT{}

At every step except the last (when $\cL=\es$) we considered the last element~$t$ of~\cL, which was subsequently deleted or replaced and thus never considered again. Every such $t$ is a node of the tree~$\cT'$ obtained from~\cT\ by deleting the children of nodes~$t$ with $|X_t|=k$. (Recall that such children $t',t''$ were deleted again immediately after they were created, so they do not give rise to a step of the algorithm.) Our aim, therefore, is to show that $|\cT'|\le 4(n-k)-1$.

By Lemma~\ref{tree_timple} it suffices to show that $\cT'$ has at most $2(n-k)$ leaves. As $n\ge k+1$, this is the case if $\cT'$ consists only of its root~$r$. If not, then $r$ is a branching node of~$\cT'$. It thus suffices to show that below every branching node $t$ of $\cT'$ there are at most $2(|X_t|-k)$ leaves; for $t=r$ this will yield the desired result.%
   \COMMENT{}

By definition of~$\cT'$, branching nodes $t$ of $\cT'$ satisfy $|X_t|\ge k+1$. So our assertion holds if the two children of $t$ are leaves. Assuming inductively that the children $t'$ and~$t''$ of~$t$ satisfy the assertion (unless they are leaves), we find that, with $X_{t'} = X_t\cap A$ and $X_{t''} = X_t\cap B$ for some ${(<k)}$-separation \AB\ of~$G$ as in the description of the algorithm, the number of leaves below $t$ is at most%
   \COMMENT{}
\[
 2(|X_t\cap A|-k)+2(|X_t\cap B|-k)\leq 2(|X_t|+(k-1)-2k)\leq 2(|X_t|-k)
\]
if neither $t'$ nor~$t''$ is a leaf, and at most
 $$1+2(|X_t\cap B|-k)\le 2(|X_t|-k)$$
if $t'$ is a leaf but $t''$ is not (say), since $X_t\sm B\ne\es$ by the choice of~\AB.
\end{proof}

Putting Lemmas \ref{prepro_run} and~\ref{main_run} together, we obtain the following:

\begin{thm}\label{mainalgo}
There is an $O(\min\{k, \sqrt n\}\cdot m\cdot n^2)$-time algorithm that finds, for any graph $G$ with $n$ vertices and $m$ edges and any fixed $k < n$, all the $k$-blocks in~$G$.
\qed
\end{thm}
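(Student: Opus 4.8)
The plan is to run the two-stage algorithm assembled in this section and read off its complexity from the lemmas already proved. Concretely: first perform the \emph{pre-processing}, building the auxiliary graph $H_k$ together with the $({<}k)$-separations labelling its non-edges; then run the \emph{main part}, which starts from $X_r=V$ and recursively splits the current sets along these labelled separations, collecting in $\cB$ the vertex sets that induce complete subgraphs of $H_k$ and have order $\ge k$. Correctness needs no new argument: Proposition~\ref{Bs_kblocks} already says that the output $\cB$ is precisely the set of $k$-blocks of $G$, since by construction of $H_k$ the $k$-blocks of $G$ are exactly the vertex sets of the maximal cliques of order $\ge k$ in $H_k$.

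For the running time I would simply add the two established bounds. Lemma~\ref{prepro_run} gives pre-processing time $O(\min\{k,\sqrt n\}\cdot m\cdot n^2)$, and Lemma~\ref{main_run} gives main-part time $O(\min\{m,n\}\cdot n^2)$. Since $k\ge 1$ and $n\ge 2$ we have $\min\{k,\sqrt n\}\ge 1$, so the main-part term satisfies $\min\{m,n\}\cdot n^2\le m\cdot n^2\le \min\{k,\sqrt n\}\cdot m\cdot n^2$ and is absorbed into the pre-processing term. Hence the total is $O(\min\{k,\sqrt n\}\cdot m\cdot n^2)$, as claimed.

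There is no real obstacle here — the substance is in Lemmas \ref{prepro_run} and \ref{main_run}. The only point needing a word of care is the standing normalisation used inside those lemmas, namely deleting isolated vertices first so that $n\le 2m$ (used for the bound of $4(n-k)$ steps in Lemma~\ref{main_run}) and so that the flow network of Lemma~\ref{prepro_run} has $\tilde n=O(m)$ non-isolated vertices. One should note that this is harmless: for $k\ge 2$ an isolated vertex is separated from every other vertex by $\es$ and so lies in no $k$-block, so removing such vertices leaves the set of $k$-blocks unchanged; the degenerate case $k=1$ (where the $1$-blocks are simply the components) can be disposed of directly in linear time. With that remark the theorem follows by combining the two lemmas.
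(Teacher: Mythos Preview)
Your proposal is correct and is essentially identical to the paper's own argument, which simply says ``Putting Lemmas~\ref{prepro_run} and~\ref{main_run} together'' and displays the theorem with a \qed. Your extra paragraph about deleting isolated vertices and the case $k=1$ is a reasonable sanity check, but the paper treats this normalisation inside the lemmas rather than here.
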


Our algorithm can easily be adapted to find the $k$-blocks of~$G$ for all values of~$k$ at once. To do this, we run our pre-processing just once to construct the graph~$H_n$, all whose non-edges $xy$ are labeled by an $x$--$y$ separation of minimum order and its value~$\kappa(x,y)$. We can then use this information at the start of the proof of Lemma~\ref{main_run}, when we check whether $H_k[X_t]$ is complete, leaving the running time of the main part of the algorithm at $O(n^3)$ as in Lemma~\ref{main_run}. Running it separately once for each~$k < n$, we obtain with Lemma~\ref{prepro_run}: 

\begin{thm}\label{allk}
There is an $O(\max\{m\sqrt n\, n^2,\,n^4\})$ algorithm that finds, for any graph~$G$ with $n$ vertices and $m$ edges, all the $k$-blocks of~$G$ (for all~$k$). \qed
\end{thm}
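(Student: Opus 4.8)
The plan is to reuse the two ingredients already developed, the pre-processing and the main part of the algorithm, but to run the pre-processing only once, in a $k$-independent form, and then to loop the main part over all admissible values of~$k$.

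First I would modify the pre-processing as follows. For every pair of non-adjacent vertices $x,y$ of~$G$ I run Dinitz's algorithm to completion (rather than stopping it after $k$ iterations of its inner loop), obtaining the exact value $\kappa(x,y)$ together with a separation of~$G$ of order exactly $\kappa(x,y)$ that separates $x$ from~$y$; I store both the value and this separation as the label of the non-edge~$xy$. Since a full run of Dinitz's algorithm on the auxiliary unit-capacity network takes $O(\sqrt n\cdot m)$ time (as in the proof of Lemma~\ref{prepro_run}) and there are $O(n^2)$ pairs to process, this modified pre-processing runs in $O(m\sqrt n\cdot n^2)$ time. Note that for any fixed $k<n$ the graph $H_k$ of the original algorithm is recovered implicitly from this data: $xy$ is an edge of~$H_k$ precisely when $xy\in E(G)$ or $\kappa(x,y)\ge k$, and when $xy$ is a non-edge of~$H_k$ its stored label is a separation of order $\kappa(x,y)<k$ separating $x$ from~$y$, i.e.\ exactly a label of the kind the main part of the algorithm requires.

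Next, for each $k\in\{1,\dots,n-1\}$ in turn I run the main part of the algorithm verbatim, using the stored labels in place of an explicit copy of~$H_k$: whenever the algorithm must decide whether $H_k[X_t]$ is complete, or must locate a non-edge $xy$ of~$H_k$ inside $X_t$, it simply compares the stored values $\kappa(x,y)$ with~$k$, at a cost of $O(n^2)$ per step. By Proposition~\ref{Bs_kblocks} each such run outputs exactly the set of $k$-blocks of~$G$, and by Lemma~\ref{main_run} it performs at most $4(n-k)$ steps, so each run costs $O(n^3)$ and all $n-1$ of them together cost $O(n^4)$.

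Adding the cost of the single pre-processing to the cost of the loop gives a total running time of $O(m\sqrt n\cdot n^2 + n^4) = O(\max\{m\sqrt n\, n^2,\, n^4\})$, as claimed. There is no real obstacle here; the only points that need checking are that the minimum-order separation stored for a non-edge $xy$ serves simultaneously as a valid $(<k)$-separation label for every $k>\kappa(x,y)$ — which is immediate — and that the step-count and correctness arguments of Lemma~\ref{main_run} and Proposition~\ref{Bs_kblocks} are insensitive to how the completeness test for $H_k[X_t]$ is implemented, which they are.
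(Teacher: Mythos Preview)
Your proposal is correct and is essentially the paper's own argument: run the pre-processing once with Dinitz's algorithm taken to completion (equivalently, build~$H_n$), storing $\kappa(x,y)$ and a minimum-order separating label for each non-edge, and then loop the unchanged main part over all~$k<n$. The bookkeeping and running-time estimates you give match the paper's exactly.
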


\noindent
Perhaps this running time can be improved if the trees $\cT_k$ exhibiting the $k$-blocks are constructed simultaneously, e.g.\ by using separations of order $\ell$ for all $\cT_k$ with $\ell < k$.

\medskip

The mere decision problem of whether $G$ has a $k$-block does not need our pre-processing, which makes the algorithm faster:

\begin{thm}\label{decisionproblem}
For fixed~$k$, deciding whether a graph with $n$~vertices and $m$~edges has a $k$-block has time complexity $O(mn+n^2)$.
\end{thm}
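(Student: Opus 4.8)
The plan is to avoid building the graph $H_k$ in full — that pre-processing is what costs the extra factor of $\min\{k,\sqrt n\}\cdot m$ — and instead to run the tree-building ``main part'' of the algorithm almost verbatim, computing the separation labels lazily, only for the non-adjacent pairs we actually encounter. Recall from Lemma~\ref{main_run} that the main part makes at most $4(n-k)$ steps, i.e.\ $O(n)$ steps, and at each step we process the last element $t$ of the list $\cL$ with its set $X_t\sub V$. So the whole algorithm will make $O(n)$ calls to a max-flow computation plus $O(n)$ cheap bookkeeping steps.

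First I would set up the same rooted-tree construction as in the main part of Section~\ref{sec_Algo}, starting from $\cL=(r)$, $X_r=V$, $\cB=\es$. At a step with last element $t$: if $|X_t|<k$, delete $t$; if $|X_t|\ge k$, we must decide whether $X_t$ is a clique of $H_k$, i.e.\ whether every pair of vertices of $X_t$ is \lk-inseparable in~$G$, and if not, produce a separating separation of order $<k$ of $G$ for some bad pair. The key observation is that we do \emph{not} need to test all $\binom{|X_t|}{2}$ pairs: it suffices to fix one vertex $x\in X_t$ and test $\kappa(x,y)$ for each $y\in X_t\sm\{x\}$ (at most $n-1$ flow computations, each stopped after $k$ augmenting-path iterations, hence $O(km)$ time — but see below for how to shave this). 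If all these succeed, $x$ is \lk-inseparable from every other vertex of $X_t$; but that is \emph{not} yet enough to conclude $X_t$ is a clique of $H_k$, since inseparability is not transitive. So after confirming $x$ is good against all of $X_t$, I would recurse: it now suffices to check that $X_t\sm\{x\}$ is a clique of $H_k$, and repeat, peeling off one vertex at a time.

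That naive recursion would cost $O(|X_t|)$ flow computations per vertex peeled, hence $O(n^2)$ flow computations per step and $O(n^3)$ overall — too slow. The fix, and the main obstacle, is to charge the work globally rather than per step. Here is the clean way: instead of ``peel off one vertex of $X_t$ at a time within a single step'', observe that whenever a single flow computation discovers a genuinely separating $(<k)$-separation $\AB$ for a pair $x,y\in X_t$, the current set $X_t$ splits, and by the counting in Lemma~\ref{main_run} the tree $\cT'$ has only $O(n-k)$ nodes. So the total number of flow computations that return a \emph{separation} is $O(n)$. The problematic flow computations are the ``wasted'' ones that confirm $\kappa(x,y)\ge k$: for a fixed $X_t$ that turns out to be a clique, we do $|X_t|-1$ of these and then stop (adding $X_t$ to~$\cB$ and deleting $t$); since the clique-leaves $X_t$ of $\cT'$ are vertex-disjoint on their interiors — more precisely, each such leaf is a $k$-block and distinct maximal cliques of $H_k$ overlap in $<k$ vertices, and in any case there are $O(n)$ leaves each of size $\le n$ — a crude bound gives $O(n)$ clique-leaves times $O(n)$ confirmations each, i.e.\ $O(n^2)$ confirmation flows. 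Each confirmation flow, stopped after at most $k-1$ successful augmentations, runs in $O(km)$ time, giving $O(km n^2)$ — again the thing we wanted to avoid. The resolution I would pursue: for the confirmation direction we do \emph{not} need the $O(\sqrt n)$-iteration Dinitz bound nor even $k$ iterations with full BFS each; a single $k$-step unit-capacity augmenting-path search from $x$ to $y$ in the split graph $G'$ costs $O(k\cdot m')=O(km)$, but crucially we are running all $|X_t|-1$ of these searches from the \emph{same} source $x$, so we can use a single incremental max-flow from $x$ to the ``sink set'' $X_t\sm\{x\}$ with capacity threshold $k$: start the flow, and as soon as $k$ units have been pushed out of $x$ we know $x$ is \lk-inseparable from \emph{every} vertex it can still reach with positive residual — the complement of the min-cut reachable set — so one flow computation of cost $O(km+n^2)$ settles $x$ against all of $X_t$ at once. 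Either that complement is all of $X_t\sm\{x\}$ (then peel $x$ and recurse) or it exhibits a separating $(<k)$-cut, which splits $X_t$ and we charge it to $\cT'$. With this pooling, each of the $O(n)$ ``peel'' events costs one $O(km+n^2)$ flow, each of the $O(n)$ ``split'' events likewise, and the dominant term is $O(n)\cdot O(km+n^2)=O(kmn+n^3)$; for fixed~$k$ this is $O(mn+n^3)$. To reach the claimed $O(mn+n^2)$ I would finally note that the peels can be organized so that each \emph{successful} peel permanently removes a vertex from further consideration in \emph{that branch of $\cT'$}, and summing $|X_t|$ over the leaves of $\cT'$ is $O(n)$ by the same disjointness-of-interiors argument used for Lemma~\ref{main_run}, bringing the total number of flow computations down to $O(n)$ with total flow cost $O(m+n)$ each after the source is fixed — i.e.\ $O(mn+n^2)$ overall, with the additive $n^2$ absorbing the per-step clique-check and bookkeeping. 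Correctness is immediate from Proposition~\ref{Bs_kblocks}: we are running exactly the same tree construction, only computing the needed separation labels on demand, so the output is again the set of $k$-blocks (and for the decision version we can simply halt and report ``yes'' the first time a set $X_t$ of size $\ge k$ is found to induce a clique of $H_k$, and ``no'' if $\cL$ empties). The main obstacle, as indicated, is the amortization: arguing that the total size of the sets $X_t$ over which we run flows is $O(n)$, so that the lazy separation computations do not blow up the running time.
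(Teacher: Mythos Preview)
Your approach has a genuine gap, and also misses the key simplification that makes the paper's argument work.

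The gap is in the pooled-flow step. You claim that after pushing $k$ units of flow from $x$ into the sink set $X_t\sm\{x\}$, the vertex $x$ is \kinsep\ from every vertex still reachable in the residual graph. This is false: the $k$ units might all be absorbed by a single sink $y_1$, while some other $y_2\in X_t$ remains reachable in the residual graph even though $\kappa(x,y_2)<k$. A single multi-sink flow from $x$ only certifies that $x$ cannot be separated from the \emph{set} $X_t\sm\{x\}$ by fewer than $k$ vertices; it says nothing about separating $x$ from an individual $y$. So the pooling does not do what you need, and without it your amortization collapses back to roughly $\sum_t |X_t|^2$ flow computations for the clique-leaves.

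More importantly, you are working much harder than necessary because you try to certify that \emph{all} of $X_t$ is \kinsep. For the decision problem this is not needed: it suffices to find \emph{any} $k$ pairwise \kinsep\ vertices, since such a set already extends to a $k$-block. The paper exploits exactly this. At each step it picks an arbitrary $k$-subset $Y\sub X_t$ and tests only the $\binom{k}{2}$ pairs in $Y$; if no pair is separated by a $(<k)$-separation of $G$, output \textsc{yes} and stop, otherwise split $X_t$ along the separation found. Each pair test is at most $k$ augmenting-path iterations of cost $O(m+n)$, so each step costs $O(k^3(m+n))$, and by Lemma~\ref{main_run} there are $O(n-k)$ steps. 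For fixed $k$ this gives $O((m+n)n)=O(mn+n^2)$ directly, with no amortization subtleties at all.
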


\proof
Given $k$ and a graph~$G$, we shall find either a \kinsep\ set of vertices in~$G$ (which we know extends to a $k$-block)\vadjust{\penalty-500} or a set $\cS$ of at most $2(n-k)-1$ separations of order~$<k$ such that among any $k$ vertices in $G$ some two are separated by a separation in~$\cS$ (in which case $G$ has no $k$-block).

Starting with $X=V(G)$, we pick a $k$-set of vertices in $X$ and test whether any two vertices in this set are separated by a $(<k)$-separation $(A,B)$ in~$G$. If not, we have found a \kinsep\ set of vertices and stop with a yes-answer. Otherwise we iterate with $X=A$ and $X=B$.

Every separation found by the algorithm corresponds to a branching node of~\cT. All these are nodes of~$\cT'$, of which there are at most ${4(n-k)-1}$ (see the proof of Lemma~\ref{main_run}). Testing whether a given pair of vertices is separated by some $(<k)$-separation of $G$ takes at most $k$ runs of the inner loop of Dinitz's algorithm (which takes $O(m+n)$ time), and we test at most ${k\choose 2}$ pairs of vertices in~$X$.%
   \COMMENT{}
\endproof

Let us say that a set $\cS$ of ($<k)$-separations in $G$ {\it witnesses\/} that $G$ has no $k$-block if among every $k$ vertices of $G$ some two are separated by a separation in~$\cS$. Trivially, if $G$ has no $k$-block then this is witnessed by some $O(n^2)$ separations. The proof of Theorem~\ref{decisionproblem} shows that this bound can be made linear:

\begin{cor}\label{certificate}
Whenever a graph of order~$n$ has no $k$-block, there is a set of at most $4(n-k)-1$ separations witnessing this.\qed
\end{cor}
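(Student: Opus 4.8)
The plan is to extract the bound directly from the proof of Theorem~\ref{decisionproblem}, which already does essentially all the work. Recall that in the course of deciding whether $G$ has a $k$-block, the algorithm either finds a \kinsep\ set (and stops with a yes-answer) or exhausts all branches and outputs a collection $\cS$ of $({<k})$-separations with the property that among every $k$ vertices of $G$ some two are separated by a separation in~$\cS$. This is exactly the notion of a witnessing set. So when $G$ has no $k$-block, the algorithm must terminate in the second way, and the set $\cS$ it produces is a witnessing set; it only remains to bound $|\cS|$.

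First I would observe that every separation the algorithm places into $\cS$ is introduced when some node $t$ of the tree $\cT$ is processed and found not to induce a complete subgraph of $H_k$ --- equivalently, when $t$ acquires two children. Thus the separations in $\cS$ are in bijection with the branching nodes of the search tree $\cT$. As in the proof of Lemma~\ref{main_run}, the children created at nodes $t$ with $|X_t| = k$ are deleted immediately and contribute nothing, so it is really the tree $\cT'$ (obtained from $\cT$ by deleting such children) whose branching nodes we must count, and by Lemma~\ref{main_run} we have $|\cT'| \le 4(n-k) - 1$. Since every branching node of $\cT'$ is in particular a node of $\cT'$, there are at most $4(n-k) - 1$ of them, hence $|\cS| \le 4(n-k) - 1$.

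Putting these together: if $G$ has no $k$-block, run the algorithm of Theorem~\ref{decisionproblem}; it outputs a set $\cS$ of at most $4(n-k)-1$ separations of order ${<k}$ such that among any $k$ vertices of $G$ some two are separated by a separation in $\cS$, i.e.\ a witnessing set. This is the claim of the corollary.

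There is no real obstacle here --- the statement is a bookkeeping consequence of the earlier proof, and the only care needed is to be precise that the separations recorded correspond to branching nodes of $\cT'$ rather than to all nodes, so that the bound $4(n-k)-1$ from Lemma~\ref{main_run} (which already counts all of $\cT'$, hence certainly all branching nodes) applies. One could note in passing that $\cS$ can even be taken to consist of at most $2(n-k)-1$ separations, matching the phrasing in the proof of Theorem~\ref{decisionproblem}, since the branching nodes of $\cT'$ number at most one less than its leaves by Lemma~\ref{tree_timple} and $\cT'$ has at most $2(n-k)$ leaves; but the stated bound $4(n-k)-1$ already suffices and is what the corollary asserts.
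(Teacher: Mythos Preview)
Your argument is correct and matches the paper's own justification: the corollary is stated with a bare \qed\ because it is read off directly from the proof of Theorem~\ref{decisionproblem}, where the separations produced are exactly the branching nodes of the search tree and the node count $4(n-k)-1$ is imported from Lemma~\ref{main_run}. Your aside that one can in fact get $2(n-k)-1$ by counting branching nodes via Lemma~\ref{tree_timple} is also what the paper has in mind (that is the bound actually promised in the first paragraph of the proof of Theorem~\ref{decisionproblem}). One small imprecision: you describe the branching step as ``$X_t$ found not to induce a complete subgraph of $H_k$'', which is the test from the \emph{main} algorithm rather than the $k$-set test used in Theorem~\ref{decisionproblem}; but the two algorithms build the same kind of binary tree and the same bound from Lemma~\ref{main_run} applies, so this does not affect the argument.
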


Let us call any tree $\cT$ as in our main algorithm (at any stage), with each of its branching nodes~$t$ labelled by a separation $(A,B)_t$ of $G$ that separates some two vertices of~$X_t$, a \emph{block-decomposition} of~$G$. The sets $X_t$ with $t$ a leaf will be called its {\em leaf sets\/}.

The {\em adhesion\/} of a block-decomposition is the maximum order of the separations~$(A,B)_t$. A block-decomposition is {\em $k$-complete\/} if it has adhesion~$<k$ and every leaf set is \kinsep\ or has size~$<k$. The {\em width\/} of a block-decomposition is the maximum order of a leaf set~$X_t$. The {\em block-width\/} ${\rm bw}(G)$ of~$G$ is the least $k$ such that $G$ has a block-decomposition of adhesion and width both at most~$k$.

Having block-width $<k$ can be viewed as dual to containing a $k$-block, much as having tree-width~$<k-1$ is dual to containing a haven or bramble of order~$k$, and having branch-width~$<k$ is dual to containing a tangle of order~$k$. Indeed, we have shown the following:

\begin{thm}\label{dualitythm}
Let $\cD = (\cT; (A,B)_t\,,\, t\in\cT)$ be a block-decomposition of a graph~$G$, and let $k\in\N$.
\begin{enumerate}[\rm(i)]\itemsep=0pt\vskip-\smallskipamount\vskip0pt
\item Every edge of $G$ has both ends in some leaf set of~$\cT$.
\item If $\cD$ has adhesion~$<k$, then any $k$-block of~$G$ is contained in a leaf set of~$\cT$.
\item If \cD\ is $k$-complete, then every $k$-block of~$G$ is a leaf set, and all other leaf sets have size~$<k$.\qed
\end{enumerate}
\end{thm}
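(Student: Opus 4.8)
The plan is to prove the three parts essentially by unwinding the definitions of block-decomposition, adhesion, width, and $k$-completeness, together with the structural fact—already used implicitly in the analysis of the main algorithm—that the separations $(A,B)_t$ attached to the branching nodes behave like the separations induced by a tree-decomposition: following the tree from the root, each child node's set $X_{t'}$ is the intersection of its parent's set with one side of the separating pair, so the leaf sets partition the ``interior'' of $G$ in a controlled way.

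For (i), I would argue by contradiction. Suppose an edge $xy$ has its two ends in no common leaf set. Starting at the root $r$ with $X_r = V$, both $x,y\in X_r$, and we descend the tree: at each branching node $t$ we have $x,y\in X_t$ and the algorithm has picked a separation $(A,B)_t$ separating two vertices of $X_t$; since $(A,B)_t$ is a separation of $G$ and $xy$ is an edge, $x$ and $y$ cannot be split across $A\setminus B$ and $B\setminus A$, so they stay together on one side, hence in one of the two children's sets. Since the tree is finite, this descent terminates at a leaf $t$ with $x,y\in X_t$, a contradiction. (One should note the degenerate cases where children with $|X_t|\le k$ sizes are handled, but an edge's two ends sit inside a set of size $\ge 2$, and the relevant descent only stops at a genuine leaf of $\cT$.)

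For (ii), let $B$ be a $k$-block of $G$ and suppose $\cD$ has adhesion $<k$. Repeat the descent: at the root $B\subseteq X_r$. At a branching node $t$ with $B\subseteq X_t$, the separation $(A,B)_t$ has order $<k$; since $B$ is \kinsep, no two of its vertices are separated by fewer than $k$ vertices, hence by $(A,B)_t$, so $B$ lies entirely in $A$ or entirely in $B$ (the sides), and therefore $B\subseteq X_{t'}$ for one child $t'$. By finiteness the descent reaches a leaf set containing $B$. For (iii), assume $\cD$ is $k$-complete. By (ii) every $k$-block $B$ lies in some leaf set $X_t$; since $X_t$ is \kinsep\ or has size $<k$ and $|B|\ge k$, the leaf set $X_t$ must be \kinsep, and then maximality of the $k$-block $B$ together with $B\subseteq X_t$ and $X_t$ \kinsep\ forces $B=X_t$. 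Conversely, any leaf set that is \kinsep\ has size $\ge k$ (since by definition of $k$-complete its only alternative is size $<k$, but a \kinsep\ set has at least $k$ vertices by definition) and hence extends to and indeed equals a $k$-block by the same maximality argument—so the leaf sets split cleanly into the $k$-blocks and the sets of size $<k$.

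The only real subtlety—the step I expect to need the most care—is handling the bookkeeping of the block-decomposition's tree versus the algorithm's tree $\cT$, in particular the convention about nodes $t$ with $|X_t|=k$ whose children were deleted: one must make sure that ``leaf set'' in the statement of the theorem refers to the leaves of the block-decomposition tree as actually retained, so that in (iii) a leaf set of size exactly $k$ that happens to be \kinsep\ is correctly counted as a $k$-block, while a node with $|X_t|=k$ that got split (and whose sub-$k$ children were discarded) is itself a leaf and is checked directly. Once the definitions are pinned down precisely, each of (i)–(iii) is a one-paragraph induction on the tree, which is why the theorem is stated with a \qed and no written proof; I would simply supply these three short descents.
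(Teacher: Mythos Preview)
Your proposal is correct and matches the paper's approach: the paper gives no written proof at all, stating the theorem with a \qed\ immediately after the words ``Indeed, we have shown the following'', treating (i)--(iii) as evident from the recursive construction of the tree~$\cT$ and the correctness of the main algorithm (Proposition~\ref{Bs_kblocks}). Your descent arguments are exactly the unpacking one would supply, and the one small point you might make explicit in~(iii) is why the $k$-block extending a \kinsep\ leaf set $X_t$ cannot land in a \emph{different} leaf set~$X_s$: at the lowest common ancestor of $s$ and~$t$ the set $X_t$ would then lie in the separator of order~$<k$, contradicting $|X_t|\ge k$.
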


Theorem~\ref{dualitythm} implies that $G$ has a block-decomposition of adhesion and width both at most~$k$ if and only if $G$ has no $(k+1)$-block. The least such~$k$ clearly equals the greatest~$k$ such that $G$ has a $k$-block, its {\em block number\/} $\beta(G)$:

\begin{cor}
Every finite graph $G$ satisfies $\beta(G) = {\rm bw}(G)$.\qed
\end{cor}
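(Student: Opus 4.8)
The plan is to expand the one-sentence remark preceding the corollary into a proof in two moves: first establish, for each $k\in\N$, the equivalence that $G$ has a block-decomposition of adhesion and width both at most $k$ \emph{if and only if} $G$ has no $(k+1)$-block, and then identify the least such $k$ with $\beta(G)$. The only point requiring care is the uniform shift of index between the $k$ of the statement, the $(k+1)$-blocks, and the parameter fed to the machinery of Section~\ref{sec_Algo}: once the indices are aligned, both implications follow at once from results already proved, so I expect no genuine obstacle — the substance has been front-loaded into Theorem~\ref{dualitythm} and into the correctness and termination of the main algorithm.

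For the forward implication, let $\cD$ be a block-decomposition of adhesion and width both at most $k$. Then every leaf set of $\cD$ has at most $k$ vertices, hence fewer than $k+1$, and the adhesion of $\cD$ is below $k+1$; so $\cD$ is (vacuously) $(k+1)$-complete. Theorem~\ref{dualitythm}(iii), applied with $k+1$ in place of $k$, then forces every $(k+1)$-block of $G$ to be a leaf set of $\cD$. This is impossible, since a $(k+1)$-block has at least $k+1$ vertices while every leaf set has at most $k$. Hence $G$ has no $(k+1)$-block. For the converse, suppose $G$ has no $(k+1)$-block and run the main algorithm of Section~\ref{sec_Algo} with parameter $k+1$. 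By Lemma~\ref{main_run} it halts, and the tree it produces — with each branching node labelled by the separation used to split it — is a block-decomposition of $G$ by Proposition~\ref{Bs_kblocks} and the construction. Every separation used has order less than $k+1$, i.e.\ at most $k$, so this block-decomposition has adhesion at most $k$. Moreover no set of $k+1$ or more vertices induces a complete subgraph of $H_{k+1}$ — such a set would be a set no two of whose vertices can be separated by fewer than $k+1$ vertices, hence contained in a $(k+1)$-block, contrary to hypothesis — so the algorithm never retains a part of size $\ge k+1$, and every leaf set is one it discarded for having fewer than $k+1$ vertices. Thus every leaf set has at most $k$ vertices, i.e.\ the width is at most $k$, as required.

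It remains to read off the block number. By the equivalence, ${\rm bw}(G)$, the least $k$ for which $G$ has a block-decomposition of adhesion and width both at most $k$, equals the least $k$ for which $G$ has no $(k+1)$-block. Let $\beta$ be the largest $k$ for which $G$ has a $k$-block. A $\beta$-block is a maximal set no two of whose vertices can be separated by fewer than $\beta$ vertices; for any $\ell\le\beta$ it is therefore still a set no two of whose vertices can be separated by fewer than $\ell$ vertices, and so is contained in an $\ell$-block. Hence $G$ has a $(k+1)$-block for every $k<\beta$ and none for $k\ge\beta$, so the least $k$ without a $(k+1)$-block is exactly $\beta$. Therefore ${\rm bw}(G)=\beta=\beta(G)$.
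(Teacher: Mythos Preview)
Your proof is correct and follows the same route the paper sketches in the sentence preceding the corollary: you make explicit the equivalence ``block-decomposition of adhesion and width $\le k$ $\Leftrightarrow$ no $(k+1)$-block'' (forward direction via Theorem~\ref{dualitythm}, converse via the algorithm's output), and then identify the least such~$k$ with~$\beta(G)$. Two cosmetic remarks: in the forward direction, part~(ii) of Theorem~\ref{dualitythm} already suffices and saves you the verification of $(k+1)$-completeness; and your appeal to Proposition~\ref{Bs_kblocks} is slightly misplaced --- that proposition concerns the output list~$\cB$, whereas the fact that the algorithm's tree is a block-decomposition is immediate from the definition of block-decomposition.
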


By Theorem~\ref{decisionproblem} and its proof, we obtain the following complexity bound:

\begin{cor}\label{blockwidthcomplexity}
Deciding whether a graph with $n$ vertices and $m$ edges has block-width~$<k$, for $k$ fixed, has time complexity $O(mn+n^2)$.\qed
\end{cor}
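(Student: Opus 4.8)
The plan is to derive this at once from Theorem~\ref{decisionproblem} together with the duality $\beta(G)={\rm bw}(G)$ just established. The key observation is that $G$ has block-width $<k$ \emph{if and only if} $G$ has no $k$-block. One direction is Theorem~\ref{dualitythm}(ii): if $B$ is a $k$-block and $\cD$ is any block-decomposition of adhesion $<k$, then $B$ is contained in a leaf set of~$\cD$, which therefore has size at least~$|B|\ge k$, so the width of~$\cD$ is $\ge k$. For the other direction, if $G$ has no $k$-block, then running the main part of the algorithm to completion using only separations of order~$<k$ produces a $k$-complete block-decomposition, and by Theorem~\ref{dualitythm}(iii) all its leaf sets then have size~$<k$, so both its width and adhesion are $<k$.

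Given this equivalence, the proof is immediate: by Theorem~\ref{decisionproblem} there is, for fixed~$k$, an $O(mn+n^2)$-time algorithm that decides whether $G$ has a $k$-block; running it and negating its answer decides in the same time bound whether $G$ has block-width $<k$. If one also wants an explicit certificate in the negative case, one notes that the tree $\cT$ constructed in the proof of Theorem~\ref{decisionproblem} is itself a block-decomposition: each branching node~$t$ is labelled by a $(<k)$-separation separating two vertices of~$X_t$ (so the adhesion is $<k$), and when the algorithm terminates with answer ``no'' every branch has been split until its set has fewer than~$k$ vertices (so the width is $<k$). Thus $\cT$ witnesses ${\rm bw}(G)\le k-1<k$, still within $O(mn+n^2)$ time.

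There is essentially no obstacle here; the content lies entirely in Theorem~\ref{decisionproblem} and in the duality. The only thing worth flagging is why the cheap bound $O(mn+n^2)$ of Theorem~\ref{decisionproblem} --- rather than the more expensive bound of Theorem~\ref{mainalgo} --- applies: deciding block-width $<k$ never requires the pre-processing step that builds~$H_k$ and tests cliqueness, since it suffices to look for a $(<k)$-separation separating two vertices of a single $k$-set at each node, which is exactly what the algorithm of Theorem~\ref{decisionproblem} does.
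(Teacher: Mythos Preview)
Your proposal is correct and follows exactly the paper's approach: the corollary is stated with a \qed\ because it is immediate from Theorem~\ref{decisionproblem} together with the duality (Theorem~\ref{dualitythm} and $\beta={\rm bw}$) that was established just before. Your write-up simply spells out this immediate derivation in more detail than the paper does.
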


For $k$~variable, the proof of Theorem~\ref{decisionproblem} yields a complexity of $O(k^3(m+n)\allowbreak(n-k))$. Alternatively, we can use pre-processing to obtain $O(\min\{k, \sqrt n\}\cdot m\cdot n^2)$ by Theorem~\ref{mainalgo}.

\medbreak

The above duality between the block number and the block-width of a graph is formally reminiscent of the various known dualites for other width parameters, such as the tree-width, branch-width, path-width, rank-width, carving-width or clique-width of a graph. The `width' to which these parameters refer, however, is usually that of a tree-like decomposition of the graph itself, which exhibits that it structurally resembles that tree. In our block-decompositions, on the other hand, the tree $T$ merely indicates a recursion by which the graph can be decomposed into small sets: the separations used to achieve this, though of small order, will not in general be nested, and the structure of $G$ will not in any intuitive sense be similar to that of~$T$.

In~\cite{DiestelOumDualityII}, Diestel and Oum give a structural duality theorem for $k$-blocks in the sense of those traditional width parameters. The graph structure that is shown to witness the absence of a $k$-block is not a tree-structure, but one modelled on more general (though still tree-like) graphs. Whether or not a structural duality between $k$-blocks and tree-like decompositions exists remains an open problem. It has been formalized, and stated explicitly~\cite{DiestelOumDualityII}, with reference to a fundamental structural duality theorem between tangle-like `dense objects' and tree-like decompositions, which implies all the traditional duality theorems for width parameters~\cite{DiestelOumDualityI} but does not yield a duality theorem for $k$-blocks.


\section{Further examples}\label{sec_FurtherEx}

In this section we discuss several examples dealing with certain situations of our results.
In particular, we will describe one example that shows that the $(k+1)$-block found in Theorem~\ref{min_deg} need not contain any vertex of the adhesion set that lies in the same part of the \td,\ and we will describe two examples dealing with the notion of \Tshaped-shaped and Lemma~\ref{lem_T-shaped}.
All these examples are included only in this extended version of this paper.

Recall that in the proof of Theorem~\ref{min_deg} we considered a $k$-lean tree 
\td\ \TV\ of a graph $G$ with $\delta(G)\geq 2k$ and showed for each leaf~$t$ of~\cT\ that $V_t$
includes a $(k+1)$-block $b$.
We now give an example where the adhesion set $V_t\cap V_{t'}$ lies completely outside~$b$, where $t'$ is the neighbour of~$t$ in~\cT.

\begin{figure}[h]
\begin{center}
\includegraphics[width=.6\textwidth]{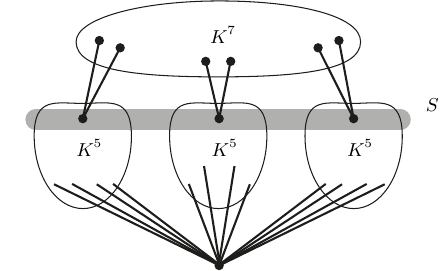}
\caption[Figure 1]{$S$ lies outside the $4$-block containing the $K^7$}\label{pic_SepOutOfBlock}
   \vskip-17pt\vskip-12pt
\end{center}
\end{figure}

\begin{ex}
Let $G$ be the graph in Figure~\ref{pic_SepOutOfBlock} and let \TV\ be the \td\ with adhesion sets $S$ and those $2$-separators that contain one vertex in~$S$ and the lowest vertex.
So \cT\ is a star with $4$ leaves.
It is not hard to show that \TV\ is $3$-lean.
For every vertex $x$ of the adhesion set $S$ inside the upper part $V_t$, its two neighbours in~$V_t$ together with the bottom vertex separate it from any vertex in~$\mathring{V}_t$ but its neighbours. Hence $x$ does not lie in the $4$-block $b$ that contains~$\mathring{V}_t$.
As no vertex of~$S$ lies in~$b$, we conclude $\mathring{V}_t=b$.
\end{ex}

Our next example shows that a local version of Lemma~\ref{lem_T-shaped} as discussed just before the lemma is false. We considered there the question of whether every proper $k$-separation \AB\ in a $k$-connected graph such that $A$ contains no $(k+1)$-block must be \Tshaped-shaped, at least if $A$ is minimal as above.

\begin{ex}\label{Pi}
Let $k=6$, and let $G$ be the complement of the disjoint union of three induced paths $P_1,P_2,P_3$ of length~$2$. Then each of three sets $V(P_i)$ is separated by the union of the other two. Hence any 7-block misses a vertex from each~$P_i$ and thus has at most 6 vertices. Hence, $G$~has no 7-block.

But $G$ is $6$-connected, and its only proper $6$-separations \AB\ have the form that either $A\sm B$ consists of the ends of some $P_i$ and $B\sm A$ of its inner vertex, or vice versa.
Let \AB\ be a $6$-separation of the first kind.
Obviously, $A$ is minimal such that \AB, for some~$B$, is a proper $6$-separation.

To show that $(A,B)$ is not \Tshaped-shaped, suppose it is, and let this be witnessed by another proper $6$-separation~$(C,D)$. Then $(C,D)$ is neither $(A,B)$ nor~$(B,A)$.%
   \COMMENT{}
   So the separators $A\cap B$ and $C\cap D$ meet in exactly one~$V(P_i)$, say in~$V(P_1)$. Then ${C\cap D}$ contains~$V(P_2)$, say, while $A\cap B$ contains~$V(P_3)$. By assumption, the ends of $P_2$ lie in~$A\sm B$. If the ends of $P_3$ lie in~$C\sm D$, say, we have $|A\cap C| = 7$. This contradicts the choice of~$(C,D)$, so $(A,B)$ is not \Tshaped-shaped.
\end{ex}

So our envisaged local version of Lemma~\ref{lem_T-shaped} is false. Since $|A| = 8\leq\dhk$ in the above example, we could not simply use Lemma~\ref{link_small} to show that $(A,B)$ is not \Tshaped-shaped. In our next example $A$~is larger, so that we can.

\begin{ex}\label{bigA}
Let $G$ be the graph of Figure~\ref{pic_ExTshaped}.
It is 5-connected but has no $6$-block.
Let $A$ be the vertex set that consists of the vertices of the upper three~$K^5$s, and let $B$ be the union of the vertex sets of the lower three complete graphs.
Then $(A,B)$ is a proper $5$-separation, with $A$ minimal.
By Lemma~\ref{link_small}, \AB\ is not \Tshaped-shaped.%
   \COMMENT{}
\end{ex}

By Lemma~\ref{lem_T-shaped}, however, both these examples must have some \Tshaped-shaped $k$-separation. In Example~\ref{Pi}, the separation~$(B,A)$ is \Tshaped-shaped. In Example~\ref{bigA}, the separation $(A',B')$ where $A'$ consists of the two leftmost complete graphs and $B'$ of the other four, is \Tshaped-shaped.

\begin{figure}[h]
\begin{center}
\includegraphics[width=.45\textwidth]{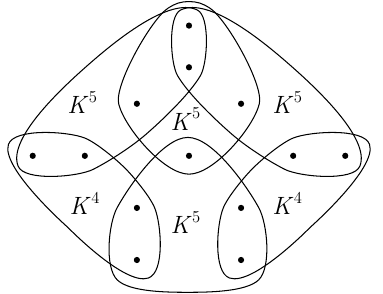}
\caption[Figure 1]{A 5-connected graph without a $6$-block}\label{pic_ExTshaped}
   \vskip-15pt\vskip-12pt
\end{center}
\end{figure}

\section{Acknowledgements}

We thank%
   \COMMENT{}
   Jens Schmidt for pointing out reference~\cite{mader74} for Theorem~\ref{min_deg}, Matthias Kriesell for advice on the connectivity of triangle-free graphs, Paul Seymour for suggesting an algorithm of the kind indicated in Section~\ref{sec_Algo}, and Sang-il Oum for pointing out Corollary~\ref{certificate} and Theorem~\ref{dualitythm}.

\bibliographystyle{plain}
\bibliography{collective}

\end{document}